\newtheorem{lemma}{Lemma}[section]
\newtheorem{proposition}{Proposition}[section]
\newtheorem{theorem}{Theorem}[section]
\newtheorem{corollary}{Corollary}[section]
\theoremstyle{definition}
\newtheorem{remark}{Remark}[section]
\newtheorem{example}{Example}[section]
\newtheorem{definition}{Definition}[section]
\begin{document}
\email{shtrakov@aix.swu.bg}
\address{Department of Computer Science \\
South-West University,  2700 Blagoevgrad, Bulgaria}
\title[Essential variables and positions in terms]
 {Essential variables and positions in terms}
\author[Sl. Shtrakov]{Slavcho Shtrakov}
\urladdr{http://home.swu.bg/shtrakov}
\date{}
\keywords{ Composition of terms,
  Essential position in a term, Globally invariant congruence, Stable variety.}
\subjclass[2000]{Primary: 08B05; Secondary: 08A02, 03C05, 08B15}
\begin{abstract}
 The paper deals with $\Sigma-$composition of terms, which
allows  us to extend the derivation rules in formal deduction of
identities.
 The
 concept of essential
variables and essential positions of terms  with respect to a set of
identities is a key step in the simplification of the process of
formal deduction.  $\Sigma-$composition of terms is defined as
replacement between $\Sigma$-equal terms. This composition induces
$\Sigma R-$deductively closed sets of identities.  In analogy to
balanced identities we introduce and investigate $\Sigma-$balanced
identities for a given set  of identities $\Sigma$.

\end{abstract}
\maketitle

\section{Introduction}
Let ${\mathcal{F}}$ be any finite set, the elements of which are
called $ operation\ symbols. $ Let $\tau:{\mathcal{F}}\to N$ be a
mapping into the non-negative integers; for $f\in{\mathcal{F}},$ the
number $\tau(f)$ will denote the \emph{arity } of the operation
symbol $f.$ The pair $(\mathcal{F},\tau)$ is called a \emph{type} or
\emph{signature}. If it is obvious what the set ${\mathcal{F}}$ is,
we will write ``$ type\ \tau$". The set of symbols of arity $p$ is
denoted by ${\mathcal{F}}_p.$

 Let $X$  be a finite set of variables, and let $\tau$
be a type with the set of operation symbols ${\mathcal
F}=\cup_{j\geq 0}{\mathcal F}_j.$ The set $W_{\tau}(X)$ of \emph{ 
terms\ of\ type\ $\tau$ } with\ variables\ from\ $X $ is the
smallest set such that

\begin{enumerate}
\item[(i)] $X\cup\mathcal{F}_0\subseteq W_\tau(X)$;

\item[(ii)] if $f $ is an $n-$ary operation symbol and
$t_1,\ldots,t_{n}$ are terms, then the ``string" $f(t_1\ldots
t_{n})$ is a term.
\end{enumerate}

 An algebra
 ${\mathcal A} = \langle A; \mathcal{F}^{\mathcal A}\rangle$
  of type $\tau$ is a pair consisting of a set $A$ and an
  indexed  set $\mathcal{F}^{\mathcal{A}}$ of operations, defined on $A$.
  If $f\in\mathcal{F}$, then $f^\mathcal{A}$ denotes a $\tau(f)$-ary operation on the
set $A.$
   We denote by ${\mathcal{A}} lg(\tau)$ the class of all algebras
   of type $\tau$. If $s, t \in W_{\tau}(X)$,
    then the pair $s \approx t$ is called an identity of type
$\tau$. $Id(\tau)$ denotes the set of all identities of type
$\tau.$

An identity $t \approx s\in Id(\tau)$ is satisfied
 in the algebra ${\mathcal A}$, if the
term operations $t^{\mathcal A}$ and $s^{\mathcal A}$, induced by
the terms $t$ and $s$ on the algebra ${\mathcal A}$ are equal, i.e.,
 $t^\mathcal{A}= s^\mathcal{A}.$
     In this case
    we write ${\mathcal A}  \models t \approx s$ and if $\Sigma$ is a set
of identities of type $\tau$, then ${\mathcal A}  \models \Sigma$
means that ${\mathcal A}  \models t \approx s$ for all $ t \approx
s\in\Sigma$.

Let $\Sigma$ be a set of identities. For $t,s\in W_\tau(X)$ we write
$\Sigma\models t\approx s$ if, given any algebra $\mathcal{A}$,\quad
$\mathcal{A}\models \Sigma\ \Rightarrow\ \mathcal{A}\models t\approx
s.$

The operators $Id$ and $Mod$ are defined
     for classes of algebras $K \subseteq \mathcal{A} lg(\tau)$ and
     for sets of identities $\Sigma \subseteq Id(\tau)$ as
     follows

\begin{align*}
Id (K)&:= \{t \approx s \mid {\mathcal A} \in K~  \Rightarrow  {\mathcal A} \models t \approx s\},\ \text{ and}\\
Mod(\Sigma)&:= \{ {\mathcal A} \mid  \  t \approx s \in  \Sigma~ \Rightarrow  {\mathcal A} \models t \approx s\}.
\end{align*}
 The set of fixed points
 with respect to the closure operators
 $Id Mod$  and
 $ Mod Id$
 form complete lattices ${\mathcal L}(\tau)$ and ${\mathcal E}(\tau)$ of all
varieties of type $\tau$ and of all equational theories (logics) of
type $\tau$.

 In  \cite{bur}
deductive closures of sets of identities are used to describe some
elements of these lattices. We will apply the concept of
 $\Sigma-$compositions of terms  to
study the lattices ${\mathcal L}(\tau)$ and ${\mathcal E}(\tau)$. We
use the concept of essential variables, as defined in
 \cite{sht} and therefore we consider such variables  with respect to a given set of
 identities,  which is a
 fully invariant congruence.

In Section \ref{sec2} we investigate the concept of
$\Sigma-$essential variables and positions.  The fictive
(non-essential) variables and positions are used to simplify the
deductions of identities in
 equational theories. We introduce $\Sigma-$composition
of terms for a given set $\Sigma$ of identities.

In Section \ref{sec3}  we describe the  closure operator $\Sigma R$
in the set of all identities of a given type, which generate
extensions of  fully invariant congruences.
    The varieties which satisfy $\Sigma
R-$closed sets are fully invariant congruences and they are called
stable. The stable varieties are compared to solid ones
\cite{den50,gra,tay2}.

In Section \ref{sec4} we introduce and study   $\Sigma-$balanced
identities and prove that $\Sigma-$balanced property is closed under
$\Sigma R$-deductions.

\section{Compositions of terms}\label{sec2}

If $t$ is a term, then the set $var(t)$ consisting of those elements
of $X$ which occur in $t$ is called the set of
 \emph{input variables (or variables)} for $t$.  If
$t=f(t_1,\ldots,t_n)$ is a non-variable term, then $f$ is the \emph{ 
root symbol (root)} of $t$ and we will write $f=root(t).$
 For a term  $t\in W_\tau(X)$ the set   $Sub(t)$ of its
subterms
 is defined as follows:
if $t\in X\cup \mathcal{F}_0$, then $Sub(t)=\{t\}$ and if
$t=f(t_1,\ldots,t_{n})$, then $Sub(t)=\{t\}\cup
Sub(t_1)\cup\ldots\cup Sub(t_n).$

The $depth$ of a term $t$ is defined inductively: if $t\in
X\cup\mathcal{F}_0$ then $Depth(t)=0;$ \ and  if
$t=f(t_1,\ldots,t_{n})$,
then 
 $Depth(t)=max \{Depth(t_1),\ldots,Depth(t_{n})\} +1.$
\begin{definition}%
\label{d-indkompGeneral} Let $r,s,t\in W_\tau(X)$ be three terms of
type $\tau$. By $t(r \leftarrow s)$ we will denote the term,
obtained by simultaneous replacement of  every occurrence of $r $ as
a
 subterm of $t$ by $s$.  This term is called
 the
\emph{inductive composition } of the terms $t$ and $s $, by $r $. In
particular,
\begin{enumerate}
\item[(i)] $t(r\leftarrow s)=t$ if $r\notin
Sub(t)$;

\item[(ii)] $t(r\leftarrow s)=s$ if $t=r$,  and

\item[(iii)] $t(r\leftarrow s)=f(t_1(r\leftarrow
s),\ldots,t_n(r\leftarrow s))$, if
$t=f(t_1,\ldots,t_n)$ and $r\in Sub(t)$, $r\neq t$.
\end{enumerate}
\end{definition}
If $r_i\notin Sub(r_j)$ when $i\neq j$, then $t(r_1\leftarrow
s_1,\ldots,r_m\leftarrow s_m)$ means the inductive composition of
$t,r_1,\ldots,r_m$ by $s_1,\ldots,s_m$. In the particular case when
$r_j=x_j$ for $j=1,\ldots,m$ and $var(t)=\{x_1,\ldots,x_m\}$ we will
briefly write $t(s_1,\ldots,s_m)$ instead of $t(x_1\leftarrow
s_1,\ldots,x_m\leftarrow s_m)$.

Any term can be regarded  as a tree with nodes labelled as the
operation symbols and its leaves   labelled as variables or nullary
operation symbols. Often the tree of a term is presented by a
diagram of the corresponding term  as it is shown by  Figure~\ref{f1}.

 Let $\tau$ be a type and $\mathcal{F}$ be its set of operation
symbols. Denote by $maxar=\max\{\tau(f)\mid f\in\mathcal{F}\}$ and
$N_\mathcal{F}:=\{m\in N \mid m\leq maxar\}$.  Let $N_\mathcal{F}^*$
be the set of all finite strings over $N_\mathcal{F}.$ The set
$N_\mathcal{F}^*$ is naturally ordered by $p\preceq q \iff p$\ is a
prefix of $q.$ The Greek letter $\varepsilon$, as usual denotes the
empty word (string) over $N_\mathcal{F}.$

To distinguish between different occurrences of the same operation
symbol in a term $t$ we assign to each operation symbol a position,
i.e., an element of a given set. Usually positions are finite
sequences (strings) of natural numbers. Each position is assigned to
a node of the tree diagram of $t$, starting with the empty sequence
$\varepsilon$ for the root and using the integers $j$, $1\leq j\leq
n_i$ for the $j$-th branch of an $n_i$-ary operational symbol $f_i$.
So, let the position $p=a_1a_2\ldots a_s\in N^*_\mathcal{F}$ be
assigned to a node of $t$ labelled by the $n_i$-ary operational
symbol $f_i$. Then the position assigned to the $j$-th child of this
node is $a_1a_2\ldots a_sj$. The set of positions of a term $t$ is
denoted by $Pos(t)$ and it is  illustrated by Example \ref{ex1}.

Thus we have $Pos(t)\subseteq N_{\mathcal{F}}^*$.

  Let $t\in W_\tau(X)$ be a term of type $\tau$ and let $sub_t:Pos(t)\to
Sub(t)$  be the function  which maps each position in a term $t$ to
the subterm of $t$, whose root node occurs at that position.

\begin{definition}%
\label{d-kompos3}
 Let $t,r\in W_\tau(X)$ be two terms of type
$\tau$ and $p\in Pos(t)$ be a position in $t.$  The positional
composition of $t$ and $r$ on $p$ is the term $s:=t(p;r)$ obtained
from $t$ by replacing   the term $sub_t(p)$ by $r$  on the position
$p$, only.
\end{definition}
\begin{example}~\label{ex1}\rm
Let $\tau=(2)$, $ t=f(f(x_1,f(f(f(x_1,x_2),x_2),x_3)),x_4)$
 and
$u=f(x_4,x_1)$. The  positions of $t$ and $u$ are written on their
nodes  in Figure \ref{f1}.
 Then the positional composition of $t$ and $u$ on the position $121\in Pos(t)$ is  $ t(121;
u)=f(f(x_1,f(f(x_4,x_1),x_3)),x_4)$ and
$sub_t(121)=f(f(x_1,x_2),x_2)$.
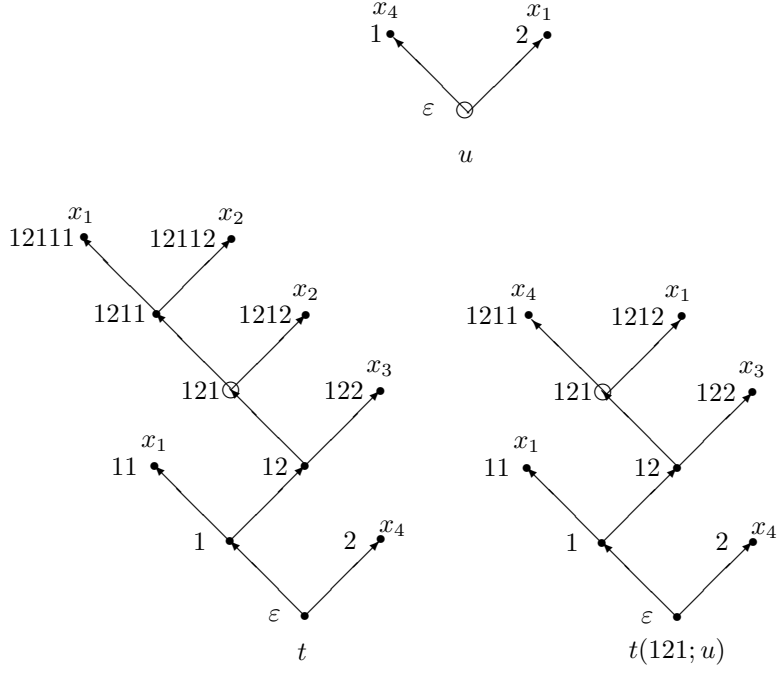
\begin{figure}
\unitlength 1mm 
\linethickness{0.4pt}
\ifx\plotpoint\undefined\newsavebox{\plotpoint}\fi 
\begin{picture}(99.87,90.33)(0,0)
\put(38.73,29.73){\vector(1,1){10}}
\put(88.22,29.55){\vector(1,1){10}}
\put(28.85,39.85){\vector(1,1){10}}
\put(18.98,49.98){\vector(1,1){10}}
\put(78.73,39.2){\vector(1,1){10}}
\put(60.34,76.67){\vector(1,1){10}}
\put(28.73,19.74){\vector(1,1){10}}
\put(78.22,19.56){\vector(1,1){10}}
\put(28.73,19.74){\vector(-1,1){10}}
\put(78.22,19.56){\vector(-1,1){10}} \put(18.73,29.74){\circle*{1}}
\put(68.22,29.56){\circle*{1}} \put(48.73,39.73){\circle*{1}}
\put(98.22,39.55){\circle*{1}} \put(38.85,49.85){\circle*{1}}
\put(88.82,49.67){\circle*{1}} \put(70.96,87.15){\circle*{1}}
\put(28.98,59.98){\circle*{1}} \put(38.73,29.73){\circle*{1}}
\put(88.22,29.55){\circle*{1}} \put(28.85,39.85){\circle{2}}
\put(78.34,39.67){\circle{2}} \put(59.96,77.15){\circle{2}}
\put(18.98,49.98){\circle*{1}} \put(68.47,49.8){\circle*{1}}
\put(50.09,87.28){\circle*{1}} \put(28.73,19.74){\circle*{1}}
\put(78.22,19.56){\circle*{1}}
\put(34.73,29.73){\makebox(0,0)[cc]{$12$}}
\put(84.22,29.55){\makebox(0,0)[cc]{$12$}}
\put(24.85,39.85){\makebox(0,0)[cc]{$121$}}
\put(74.34,39.67){\makebox(0,0)[cc]{$121$}}
\put(55.96,77.15){\makebox(0,0)[rc]{$\varepsilon$}}
\put(14.09,49.98){\makebox(0,0)[cc]{$1211$}}
\put(63.58,49.8){\makebox(0,0)[cc]{$1211$}}
\put(48.21,87.28){\makebox(0,0)[cc]{$1$}}
\put(24.73,19.74){\makebox(0,0)[cc]{$1$}}
\put(74.22,19.56){\makebox(0,0)[cc]{$1$}}
\put(18.73,32.74){\makebox(0,0)[cc]{$x_1$}}
\put(68.22,32.56){\makebox(0,0)[cc]{$x_1$}}
\put(14.73,29.74){\makebox(0,0)[cc]{$11$}}
\put(64.22,29.56){\makebox(0,0)[cc]{$11$}}
\put(43.84,39.73){\makebox(0,0)[cc]{$122$}}
\put(93.33,39.55){\makebox(0,0)[cc]{$122$}}
\put(33.44,49.85){\makebox(0,0)[cc]{$1212$}}
\put(82.93,49.67){\makebox(0,0)[cc]{$1212$}}
\put(67.56,87.15){\makebox(0,0)[cc]{$2$}}
\put(22.39,59.98){\makebox(0,0)[cc]{$12112$}}
\put(48.73,42.73){\makebox(0,0)[cc]{$x_3$}}
\put(98.22,42.55){\makebox(0,0)[cc]{$x_3$}}
\put(38.85,52.85){\makebox(0,0)[cc]{$x_2$}}
\put(88.34,52.67){\makebox(0,0)[cc]{$x_1$}}
\put(69.96,90.15){\makebox(0,0)[cc]{$x_1$}}
\put(28.98,62.98){\makebox(0,0)[cc]{$x_2$}}
\put(38.73,9.86){\vector(1,1){10}}
\put(88.22,9.68){\vector(1,1){10}}
\put(38.73,9.86){\vector(-1,1){10}}
\put(88.32,9.5){\vector(-1,1){10}}
\put(38.73,29.99){\vector(-1,1){10}}
\put(88.22,29.81){\vector(-1,1){10}}
\put(28.86,40.11){\vector(-1,1){10}}
\put(18.98,50.24){\vector(-1,1){10}}
\put(78.9,39.28){\vector(-1,1){10}}
\put(60.52,76.76){\vector(-1,1){10}} \put(38.73,9.86){\circle*{1}}
\put(88.22,9.68){\circle*{1}}
\put(34.73,9.86){\makebox(0,0)[cc]{$\varepsilon$}}
\put(84.22,9.68){\makebox(0,0)[cc]{$\varepsilon$}}
\put(44.73,19.86){\makebox(0,0)[cc]{$2$}}
\put(94.22,19.68){\makebox(0,0)[cc]{$2$}}
\put(50.38,21.39){\makebox(0,0)[cc]{$x_4$}}
\put(99.87,21.21){\makebox(0,0)[cc]{$x_4$}}
\put(3.71,60.1){\makebox(0,0)[cc]{$12111$}}
\put(9.02,63.11){\makebox(0,0)[cc]{$x_1$}}
\put(9.37,60.28){\circle*{1}} \put(48.79,20){\circle*{1}}
\put(98.32,19.62){\circle*{1}}
\put(67.88,52.86){\makebox(0,0)[cc]{$x_4$}}
\put(49.5,90.33){\makebox(0,0)[cc]{$x_4$}}
\put(38.36,5){\makebox(0,0)[cc]{$t$}}
\put(60.1,71){\makebox(0,0)[cc]{$u$}}
\put(88.21,5){\makebox(0,0)[cc]{$t(121;u)$}}
\end{picture}

  \caption{Positional composition of terms}\label{f1}
\end{figure}

\end{example}

\begin{remark}\label{rem-assoc}
The positional composition has the following properties:
\begin{enumerate}

\item[1.] 
If $\langle\langle p_1,p_2\rangle,\langle t_1,t_2\rangle\rangle$ is
a pair with $p_1\not\prec p_2\  \&\  p_2\not\prec p_1$, then
$$t(p_1,p_2;t_1,t_2)=t(p_1;t_1)(p_2;t_2)=t(p_2;t_2)(p_1;t_1);$$

\item[2.] If $S=\langle p_1,\ldots,p_m\rangle$ and $T=\langle t_1,\ldots,t_m\rangle$
with  $$(\forall
 p_i,p_j\in S)\    (i\neq j \Rightarrow p_i\not\prec p_j\  \&\  p_j\not\prec p_i)$$
 and $\pi$ is a permutation of the set $\{1,\ldots,m\}$, then
$$t(p_1,\ldots,p_m;t_1,\ldots,t_m)=
t(p_{\pi(1)},\ldots,p_{\pi(m)};t_{\pi(1)},\ldots,t_{\pi(m)}).$$

\item[3.] If \ $t,s,r\in W_\tau(X)$, \ $p\in Pos(t)$ and\ $q\in Pos(s)$,
then $t(p;s(q;r))=t(p;s)(pq;r)$.

\item[4.] Let $s,t\in W_\tau(X)$ and $r\in Sub(t)$ be  terms of type
$\tau$. Let $\{ p_1,\ldots,p_m\}=\{p\in Pos(t)\mid sub_t(p)=r\}$.
Then we have
$$t(p_1,\ldots,p_m;s):=t(p_1; s)(p_2; s),\ldots,(p_m; s)=t(r\leftarrow s),$$
\end{enumerate}
 which shows that any inductive
composition can be represented as  a positional one. On the other
side there are  examples of  positional compositions which can not
be realized as inductive compositions.
\end{remark}

\begin{definition}
\label{d-closReplac} Let $\Sigma\subseteq Id(\tau)$, $t\in
W_\tau(X_n)$ be an $n-$ary term of type $\tau$, $\mathcal{A}=\langle
A,\mathcal{F}\rangle$ be an algebra of type $\tau$ and let $x_i\in
var(t)$ be a variable which occurs in $t.$

 (i) \cite{sht} The
variable $x_i$ is called \emph{essential}  for $t$ with respect to
 the algebra $\mathcal{A}$ if there are $n+1$ elements
$a_1,\ldots,a_{i-1},a,b,a_{i+1},\ldots,a_n\in A$  such that
$$t^\mathcal{A}(a_1,\ldots,a_{i-1},a,a_{i+1},\ldots,a_n)\neq
t^\mathcal{A}(a_1,\ldots,a_{i-1},b,a_{i+1},\ldots,a_n).$$ The set of
all essential variables for $t$ with respect to  $\mathcal{A}$ will
be denoted by $Ess(t,\mathcal{A})$. $Fic(t,\mathcal{A})$ denotes the
set of all variables in $var(t)$, which are not essential with
respect to $\mathcal{A}$, called fictive ones.

(ii) 
\label{d-EssSig}  A variable $x_i$ is said to be \emph{ 
$\Sigma-$essential} for a term $t$  if there is an algebra
$\mathcal{A}$, such that $\mathcal{A}\models \Sigma$ and $ x_i\in
Ess(t,\mathcal{A}).$ The set of all $\Sigma-$essential variables for
$t$ will be denoted by $Ess(t,\Sigma).$ If a variable is not
$\Sigma-$essential for $t$, then it is called \emph{$\Sigma-$fictive}
for $t$. $Fic(t,\Sigma)$ denotes the set of all $\Sigma-$fictive
variables for $t.$
\end{definition}
\begin{proposition}\label{p-SigmaEss}
If $\Sigma_1\subseteq\Sigma_2\subseteq Id(\tau)$, $t\in W_\tau(X)$
and $x_i\in Ess(t,\Sigma_2)$, then $x_i\in Ess(t,\Sigma_1)$.
\end{proposition}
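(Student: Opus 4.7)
The plan is to unfold both occurrences of $\Sigma$-essentiality directly from Definition~\ref{d-closReplac}(ii) and use the obvious monotonicity of the satisfaction relation $\mathcal{A} \models \Sigma$ in the argument $\Sigma$. No induction on term structure, and no manipulation of the $n+1$ witnessing elements $a_1,\ldots,a_{i-1},a,b,a_{i+1},\ldots,a_n$, should be needed: the same algebra that witnesses $\Sigma_2$-essentiality will serve as witness for $\Sigma_1$-essentiality.

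Concretely, I would proceed as follows. Assume $x_i \in Ess(t,\Sigma_2)$. By the definition of $\Sigma_2$-essentiality there exists an algebra $\mathcal{A}$ of type $\tau$ such that $\mathcal{A} \models \Sigma_2$ and $x_i \in Ess(t,\mathcal{A})$. Next I would invoke the elementary fact that if $\mathcal{A} \models \Sigma_2$ and $\Sigma_1 \subseteq \Sigma_2$, then in particular $\mathcal{A} \models s \approx s'$ for each $s \approx s' \in \Sigma_1$, so $\mathcal{A} \models \Sigma_1$. Hence $\mathcal{A}$ is itself an algebra satisfying $\Sigma_1$ for which $x_i$ is essential, and applying the definition of $\Sigma_1$-essentiality in the other direction yields $x_i \in Ess(t,\Sigma_1)$.

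There is no real obstacle: the statement is pure monotonicity of the existential quantifier ``there exists $\mathcal{A}$ with $\mathcal{A} \models \Sigma$'' with respect to shrinking $\Sigma$. The only care needed is to keep straight which direction of the inclusion goes with which direction of the implication; a brief sentence noting that enlarging $\Sigma$ can only restrict the class of satisfying algebras (equivalently, $Mod(\Sigma_1) \supseteq Mod(\Sigma_2)$) makes the argument transparent and corresponds to the antitone behaviour of $Mod$ recorded in the introduction.
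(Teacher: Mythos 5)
Your argument is correct: unfolding Definition~\ref{d-closReplac}(ii) and observing that $\mathcal{A}\models\Sigma_2$ together with $\Sigma_1\subseteq\Sigma_2$ gives $\mathcal{A}\models\Sigma_1$ is exactly the intended reasoning, which the paper leaves implicit by stating the proposition without proof. Nothing is missing.
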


\begin{theorem}
\label{t-EssSig} Let $t\in W_\tau(X)$ and $\Sigma\subseteq
Id(\tau)$.
 A variable $x_i$ is $\Sigma-$essential for $t$
 if and only if there is a term $r$ of type $\tau$ such that $r\neq
 x_i$ and
$\mathcal{A}\not\models t\approx t(x_i\leftarrow r)$  for some
algebra $\mathcal{A}\in\mathcal{A} lg(\tau)$ with
$\mathcal{A}\models\Sigma.$
\end{theorem}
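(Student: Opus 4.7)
My plan is to reduce both directions to the standard substitution lemma for term operations, which states that for any algebra $\mathcal{A}$ of type $\tau$ and any assignment $v:X\to A$,
\[
(t(x_i\leftarrow r))^\mathcal{A}(v)=t^\mathcal{A}(v'),
\]
where $v'$ coincides with $v$ except that $v'(x_i):=r^\mathcal{A}(v)$. This is a routine induction on the structure of $t$ following Definition \ref{d-indkompGeneral}, and I would take it as the main technical input rather than re-deriving it.

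For the forward implication, suppose $x_i\in Ess(t,\Sigma)$. By Definition \ref{d-closReplac}(ii) there exist an algebra $\mathcal{A}\models\Sigma$ and elements $a_1,\ldots,a_{i-1},a,b,a_{i+1},\ldots,a_n\in A$ with $t^\mathcal{A}(a_1,\ldots,a,\ldots,a_n)\neq t^\mathcal{A}(a_1,\ldots,b,\ldots,a_n)$. I would pick a variable $y\in X\setminus var(t)$ and set $r:=y$, so that $r\neq x_i$. Defining $v$ by $v(x_j)=a_j$ for $j\neq i$, $v(x_i)=a$, $v(y)=b$, one has $t^\mathcal{A}(v)=t^\mathcal{A}(a_1,\ldots,a,\ldots,a_n)$, and the substitution lemma gives $(t(x_i\leftarrow y))^\mathcal{A}(v)=t^\mathcal{A}(a_1,\ldots,b,\ldots,a_n)$. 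These disagree, so $\mathcal{A}\not\models t\approx t(x_i\leftarrow r)$.

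For the converse, assume there is a term $r\neq x_i$ and an algebra $\mathcal{A}\models\Sigma$ with $\mathcal{A}\not\models t\approx t(x_i\leftarrow r)$. Pick a valuation $v$ witnessing $t^\mathcal{A}(v)\neq (t(x_i\leftarrow r))^\mathcal{A}(v)$. By the substitution lemma the right-hand side equals $t^\mathcal{A}(v')$, where $v'$ agrees with $v$ off $x_i$ and sends $x_i$ to $r^\mathcal{A}(v)$. Setting $a_j:=v(x_j)$ for $j\neq i$, $a:=v(x_i)$, and $b:=r^\mathcal{A}(v)$ produces precisely the tuple required by Definition \ref{d-closReplac}(i), so $x_i\in Ess(t,\mathcal{A})$; since $\mathcal{A}\models\Sigma$, Definition \ref{d-closReplac}(ii) yields $x_i\in Ess(t,\Sigma)$.

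The only mild subtlety is the availability of a variable $y\in X\setminus var(t)$ for the forward direction; this is automatic under the customary convention that the underlying pool of variables is countably infinite, and otherwise one can enlarge $X$ without loss. No deeper obstacle arises, since the proof is essentially a direct translation of the two definitions through the substitution lemma.
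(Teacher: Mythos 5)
Your proof is correct and follows essentially the same route as the paper: in the forward direction you take $r$ to be a fresh variable (the paper uses $x_{n+1}$ and cites Lemma 3.5 of \cite{sht} for the resulting non-satisfaction, whereas you verify it directly via the substitution lemma), and in the converse you read off the witnessing elements $a=v(x_i)$ and $b=r^{\mathcal{A}}(v)$ exactly as the paper does. The only difference is that you make explicit the routine substitution computation that the paper leaves to a citation, which is a presentational rather than a structural divergence.
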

\begin{proof} \  Let  $t\in W_\tau(X_n)$ for some
$n\in N$ and let $\mathcal{A}\in\mathcal{A} lg(\tau)$ be an algebra
for which $\mathcal{A}\models\Sigma$ and $x_i\in
Ess(t,\mathcal{A})$.
 Then from Lemma
3.5  of \cite{sht} it follows that $\mathcal{A}\not\models t\approx
t(x_i\leftarrow x_{n+1}).$ Hence $\mathcal{A}\not\models t\approx
t(x_i\leftarrow r)$ with $r=x_{n+1}$.

Conversely, let us assume that there is a term $r$, $r\neq x_i$ of
type $\tau$ with $\mathcal{A}\not\models t\approx t(x_i\leftarrow
r)$ for an algebra $\mathcal{A}\in\mathcal{A} lg(\tau)$ with
$\mathcal{A}\models\Sigma.$

 Let $m\in N$
be a natural number for which $r\in W_\tau(X_m)$. So, there are
$m+n$ values
$a_1,\ldots,a_{i-1},a_i,a_{i+1},\ldots,a_n,b_1,\ldots,b_m\in A$
 such that
$r^\mathcal{A}(b_1, \ldots,b_m)\neq a_i$ and $$
t^\mathcal{A}(a_1,\ldots,a_{i-1},a_i,a_{i+1},\ldots,a_n) \neq
t^\mathcal{A}(a_1,\ldots,a_{i-1},r^\mathcal{A}(b_1,
\ldots,b_m),a_{i+1},\ldots,a_n).$$
 The last inequality shows that
$x_i\in Ess(t,\mathcal{A}).$ Hence $x_i$ is $\Sigma-$essential for
$t$.\end{proof}
\begin{corollary}
\label{c-ess1} If $t\approx s\in\Sigma$ and $x_i\in Fic(t,\Sigma)$,
then for each term $r\in W_\tau(X)$, we have $\Sigma\models
t(x_i\leftarrow r)\approx s.$
\end{corollary}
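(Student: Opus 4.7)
The plan is to reduce the corollary to Theorem \ref{t-EssSig} together with the transitivity of the satisfaction relation. Take an arbitrary algebra $\mathcal{A}\in\mathcal{A} lg(\tau)$ with $\mathcal{A}\models\Sigma$; it suffices to show $\mathcal{A}\models t(x_i\leftarrow r)\approx s$ for each $r\in W_\tau(X)$.

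First I would dispatch the trivial case $r=x_i$: here $t(x_i\leftarrow r)=t$, so the desired identity is just $t\approx s$, which holds because $t\approx s\in\Sigma$ and $\mathcal{A}\models\Sigma$. For the main case $r\neq x_i$, I would apply the contrapositive of Theorem \ref{t-EssSig}. Since $x_i\in Fic(t,\Sigma)$, no algebra modeling $\Sigma$ can witness essentiality via a non-variable replacement, so for every such $r\neq x_i$ and every $\mathcal{A}\models\Sigma$ we have $\mathcal{A}\models t\approx t(x_i\leftarrow r)$, i.e.\ $t^{\mathcal{A}}=t(x_i\leftarrow r)^{\mathcal{A}}$ as term operations.

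Combining this with $\mathcal{A}\models t\approx s$ (again from $t\approx s\in\Sigma$) yields $t(x_i\leftarrow r)^{\mathcal{A}}=t^{\mathcal{A}}=s^{\mathcal{A}}$, hence $\mathcal{A}\models t(x_i\leftarrow r)\approx s$. Since $\mathcal{A}$ was an arbitrary model of $\Sigma$, we conclude $\Sigma\models t(x_i\leftarrow r)\approx s$.

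I do not expect any real obstacle: the only subtlety is handling the degenerate replacement $r=x_i$ separately, since Theorem \ref{t-EssSig} is stated with the hypothesis $r\neq x_i$. Everything else is a direct chaining of the fictiveness characterization with transitivity of equality on term operations.
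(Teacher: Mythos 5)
Your proof is correct and follows exactly the route the paper intends: the paper states this as an immediate corollary of Theorem \ref{t-EssSig}, whose contrapositive gives $\Sigma\models t\approx t(x_i\leftarrow r)$ for all $r\neq x_i$, after which transitivity with $t\approx s$ finishes the argument. Your separate handling of the degenerate case $r=x_i$ is a sensible (if minor) point of care.
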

\begin{corollary}
\label{c-ess2} A variable $x_i$ is $\Sigma-$essential for $t\in
W_\tau(X_n)$  if and only if $x_i$ is essential for $t$ with respect
to any $Mod(\Sigma)$-free algebra with at least $n+1$ free
generators.
 \end{corollary}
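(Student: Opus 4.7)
The plan is to exploit the universal property of free algebras: for any identity $p \approx q$ whose variables are among the free generators, the $Mod(\Sigma)$-free algebra $\mathcal{F}$ satisfies $p \approx q$ if and only if $\Sigma \models p \approx q$. Combined with Theorem \ref{t-EssSig}, this gives a clean equivalence.

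For the ``only if'' direction, I would start from $x_i \in Ess(t,\Sigma)$ and invoke Theorem \ref{t-EssSig}. In fact, the first half of the proof of that theorem already shows more: one can take the witnessing term to be the single fresh variable $r = x_{n+1}$, so there is some $\mathcal{A}\models\Sigma$ with $\mathcal{A}\not\models t \approx t(x_i\leftarrow x_{n+1})$. Hence $\Sigma\not\models t \approx t(x_i\leftarrow x_{n+1})$. Now let $\mathcal{F}$ be any $Mod(\Sigma)$-free algebra with free generators $g_1,\ldots,g_{n+1},\ldots$ (we use only that there are at least $n+1$ of them). Because an identity in the free generators holds in $\mathcal{F}$ iff it is a consequence of $\Sigma$, we obtain
$$t^{\mathcal{F}}(g_1,\ldots,g_{i-1},g_i,g_{i+1},\ldots,g_n) \neq t^{\mathcal{F}}(g_1,\ldots,g_{i-1},g_{n+1},g_{i+1},\ldots,g_n).$$
These two assignments differ only in the $i$-th coordinate, so $x_i \in Ess(t,\mathcal{F})$.

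For the ``if'' direction, the argument is essentially by definition: a $Mod(\Sigma)$-free algebra $\mathcal{F}$ belongs to $Mod(\Sigma)$, so if $x_i \in Ess(t,\mathcal{F})$ then Definition \ref{d-closReplac}(ii) immediately gives $x_i \in Ess(t,\Sigma)$.

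The only step that needs some care is the free-algebra step in the forward direction: one must justify that the inequality of the two term operations on the free generators really follows from $\Sigma\not\models t\approx t(x_i\leftarrow x_{n+1})$. This is where the hypothesis ``at least $n+1$ free generators'' is used, and it reduces to the standard fact that the free generators are algebraically independent modulo $\Sigma$. I do not expect any other serious obstacle.
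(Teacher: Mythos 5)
Your proof is correct and follows what is evidently the intended argument: the paper states this result without proof as a corollary of Theorem \ref{t-EssSig}, and your route---extracting $\Sigma\not\models t\approx t(x_i\leftarrow x_{n+1})$ from the forward half of that theorem's proof and then using the standard fact that a $Mod(\Sigma)$-free algebra on at least $n+1$ generators satisfies an identity in those variables exactly when it is a $\Sigma$-consequence, witnessed at the free generators---is the canonical derivation. The converse via Definition \ref{d-closReplac}(ii), using only that the free algebra lies in $Mod(\Sigma)$, is likewise exactly what is needed.
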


\begin{corollary}\label{c-EssId2}
\label{c-EssId} Let $\Sigma\subseteq Id(\tau)$ be a set of
identities of type $\tau$ and $t\approx s\in\Sigma$. If a variable
$x_i$ is $\Sigma-$fictive  for $t$, then it is fictive for $s$ with
respect to  each algebra $\mathcal{A}\in Mod(\Sigma)$.
\end{corollary}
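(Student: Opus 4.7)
The plan is to combine the characterization of $\Sigma$-essential variables given in Theorem~\ref{t-EssSig} with the observation that the relation ``$\mathcal{A}\models u\approx v$'' is stable under substitution of a single variable by any term. I would fix an arbitrary algebra $\mathcal{A}\in Mod(\Sigma)$ and a fresh variable (say $x_{n+1}$, where $t,s\in W_\tau(X_n)$), and try to show that $\mathcal{A}\models s\approx s(x_i\leftarrow x_{n+1})$; by the lemma from \cite{sht} quoted in the proof of Theorem~\ref{t-EssSig}, this is equivalent to $x_i\notin Ess(s,\mathcal{A})$, i.e.\ to $x_i$ being fictive for $s$ with respect to $\mathcal{A}$.

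To get $\mathcal{A}\models s\approx s(x_i\leftarrow x_{n+1})$ I would chain three facts. First, since $x_i$ is $\Sigma$-fictive for $t$, the contrapositive of Theorem~\ref{t-EssSig} applied with $r=x_{n+1}\neq x_i$ yields
$$\mathcal{A}\models t\approx t(x_i\leftarrow x_{n+1}).$$
Second, because $t\approx s\in\Sigma$ and $\mathcal{A}\in Mod(\Sigma)$, we have $\mathcal{A}\models t\approx s$. Third, since identities satisfied in an algebra are preserved under substituting any term for a variable (the term operations $t^{\mathcal A}$ and $s^{\mathcal A}$ are equal as operations, so evaluating both at $r^{\mathcal A}$ in the $i$-th argument gives the same element), we obtain
$$\mathcal{A}\models t(x_i\leftarrow x_{n+1})\approx s(x_i\leftarrow x_{n+1}).$$
Transitivity of $\models$ then combines these three identities to give $\mathcal{A}\models s\approx s(x_i\leftarrow x_{n+1})$, which is what we need.

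There is essentially no hard step here; the only point requiring care is the ``substitution'' step, where one has to note that $\mathcal{A}\models t\approx s$ implies $\mathcal{A}\models t(x_i\leftarrow r)\approx s(x_i\leftarrow r)$ for every term $r$. This is a direct consequence of the semantic definition of $t^{\mathcal A}$: replacing $x_i$ everywhere by $r$ amounts to pre-composing the $i$-th input with $r^{\mathcal A}$, and pre-composition preserves the equality of term operations. Once this is noted, the conclusion that $x_i$ is fictive for $s$ with respect to $\mathcal{A}$ follows immediately from the quoted Lemma~3.5 of \cite{sht}.
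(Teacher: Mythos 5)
Your proof is correct. The paper states this corollary without giving any proof of its own, so there is nothing to contrast it with; your derivation --- chaining $\mathcal{A}\models s\approx t$, the contrapositive of Theorem~\ref{t-EssSig} giving $\mathcal{A}\models t\approx t(x_i\leftarrow x_{n+1})$, closure of satisfaction under substitution, and then Lemma~3.5 of \cite{sht} applied to $s$ --- is exactly the natural route the author evidently had in mind, and every step (including the substitution step you rightly flag as the only one needing care) is sound.
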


    The concept of
$\Sigma-$essential positions is a natural extension of
$\Sigma-$essential variables.

\begin{definition}
\label{d-EssPos1}Let  $\mathcal{A}=\langle A,\mathcal{F}\rangle$ be
an algebra of type $\tau$, $t\in W_\tau(X_n)$, and let $p\in Pos(t)$.

 (i) If $x_{n+1}\in
Ess(t(p;x_{n+1}),\mathcal{A})$, then the
  position $p\in Pos(t)$ is called
\emph{essential} for $t$  with respect to   the algebra
$\mathcal{A}$. The set of all essential positions for  $t$  with
respect to  $\mathcal{A}$ is denoted by $PEss(t,\mathcal{A}).$ When
a position $p\in Pos(t)$ is not essential for $t$ with respect to
$\mathcal{A}$, it is called \emph{fictive} for $t$ with respect to
$\mathcal{A}$. The set of all fictive positions with respect to
$\mathcal{A}$ is denoted by $PFic(t,\mathcal{A}).$

(ii) If $x_{n+1}\in
Ess(t(p;x_{n+1}),\Sigma)$, then the position $p\in Pos(t)$ is called
\emph{$\Sigma-$ essential} for $t$.  The set of $\Sigma-$essential
positions for  $t$ is denoted by $PEss(t,\Sigma).$ When a position
is not $\Sigma-$essential for $t$ it is called \emph{ 
$\Sigma-$fictive}. $PFic(t,\Sigma)$ denotes the set of all
$\Sigma-$fictive positions for $t.$

The set of $\Sigma$-essential subterms of $t$ is defined as follows:
$SEss(t,\Sigma):=\\ \{sub_t(p) \mid  $ $\ p\in PEss(t,\Sigma)\}$.
$SFic(t,\Sigma)$ denotes the set $SFic(t,\Sigma):=Sub(t)\setminus
SEss(t,\Sigma)$.
\end{definition}
So, $\Sigma$-essential subterms of a term are subterms which
occur at a $\Sigma$-essential position. Since one subterm can occur
 at more than one position in a term, and can occur in both $\Sigma$-essential
 and non-$\Sigma$-essential positions, we note that a subterm is $\Sigma$-essential
 if it occurs at least once in a $\Sigma$-essential position, and $\Sigma$-fictive otherwise.

\begin{example}\rm
\label{e-EssPoss1} Let $\tau=(2)$ and let
$t=f(f(x_1,x_2),f(f(x_1,x_2),x_3))$. Let us consider the variety
$RB=Mod(\Sigma)$ of rectangular bands, where
\begin{equation*}~\label{eq2} \Sigma=\{f(x_1,f(x_2,x_3)) \approx f(f(x_1,x_2),x_3) \approx
f(x_1,x_3),\ f(x_1,x_1) \approx x_1\}.\end{equation*}
 It is not difficult to see that the $\Sigma-$essential
positions and subterms of $t$ are
\begin{center}
\begin{tabular}{l}
$PEss(t,\Sigma)=\{\varepsilon,1,11,2,22\}, $\\
$SEss(t,\Sigma)=\{t,f(x_1,x_2),x_1,f(f(x_1,x_2),x_3),x_3\}$\\
$PFic(t,\Sigma)=Pos(t)\setminus PEss(t,\Sigma)= \{12,21,211,212\},$\\ $ SFic(t,\Sigma)=\{x_2\}.$ 
\end{tabular}\end{center}
The $\Sigma-$essential and $\Sigma-$fictive positions of $t$ are
represented by large and small black circles, respectively in Figure
\ref{f-EssPos}. Note that $|PFic(t,\Sigma)|>|SFic(t,\Sigma)|$. This
is because there is one subterm, $f(x_1,x_2)$, which occurs more
than once, once each in an essential and non-essential position, so
that $|Pos(t)|> |Sub(t)|$.
\end{example}

\begin{figure}[h]
\unitlength=1.00mm \special{em:linewidth 0.4pt}
\linethickness{0.4pt}
\begin{picture}(64.00,45.00)
\put(35.00,10.00){\circle*{2.00}}
\put(35.00,1.00){\makebox(0,0)[cc]{$ t$}} \
\put(35.00,10.00){\line(-3,2){15.00}}
\put(35.00,10.00){\line(3,2){15.00}}
\put(20.00,20.00){\circle*{2.00}}
\put(50.00,20.00){\circle*{2.00}}

\put(50.00,20.00){\line(-1,1){10.00}}
\put(50.00,20.00){\line(1,1){10.00}}
\put(20.00,20.00){\line(-1,2){5.00}}
\put(20.00,20.00){\line(1,2){5.00}}
\put(15.00,30.00){\circle*{2.00}}
\put(25.00,30.00){\circle*{1.00}}
\put(60.00,30.00){\circle*{2.00}}
\put(40.00,30.00){\circle*{1.00}}
\put(40.00,30.00){\line(-1,2){5.00}}
\put(40.00,30.00){\line(1,2){5.00}}
\put(35.00,40.00){\circle*{1.00}}
\put(45.00,40.00){\circle*{1.00}}
\put(17.00,20.00){\makebox(0,0)[cc]{$1$}}
\put(23.00,20.00){\makebox(0,0)[cc]{$f$}}
\put(37.00,30.00){\makebox(0,0)[cc]{21}}
\put(43.00,30.00){\makebox(0,0)[cc]{$f$}}
\put(64.00,29.00){\makebox(0,0)[cc]{$x_3$}}
\put(31.50,40.00){\makebox(0,0)[cc]{211}}
\put(45.00,44.00){\makebox(0,0)[cc]{$x_2$}}
\put(25.00,34.00){\makebox(0,0)[cc]{$x_2$}}
\put(16.00,34.00){\makebox(0,0)[cc]{$x_1$}}
\put(12.00,30.00){\makebox(0,0)[cc]{11}}
\put(22.00,30.00){\makebox(0,0)[cc]{12}}
\put(36.00,45.00){\makebox(0,0)[cc]{$x_1$}}
\put(41.00,40.00){\makebox(0,0)[cc]{212}}
\put(54.00,18.00){\makebox(0,0)[cc]{$f$}}
\put(39.00,8.00){\makebox(0,0)[cc]{$f$}}
\put(56.00,30.00){\makebox(0,0)[cc]{22}}
\put(47.00,20.00){\makebox(0,0)[cc]{2}}
\put(31.00,8.00){\makebox(0,0)[cc]{$\varepsilon$}}
\end{picture}

  \caption{$\Sigma-$essential positions for $t$ from Example \ref{e-EssPoss1}.}
\label{f-EssPos}
\end{figure}
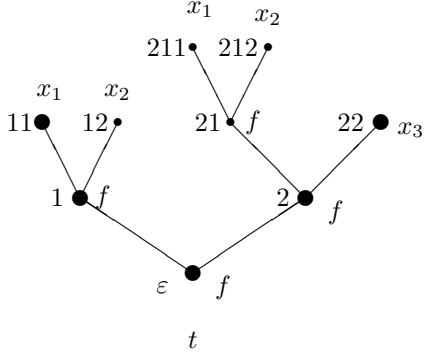

\begin{theorem}
 \label{t-EssPos2}
If $p\in PEss(t,\Sigma)$, then each position $q\in Pos(t)$ with
$q\preceq p$ is $\Sigma-$essential for $t$.
\end{theorem}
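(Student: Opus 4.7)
The plan is to unfold the definitions and transport the witnessing data from position $p$ to the shorter prefix $q$. Writing $p=qq'$ and fixing $t\in W_\tau(X_n)$, the hypothesis $p\in PEss(t,\Sigma)$ gives an algebra $\mathcal{A}\models\Sigma$ together with elements $a_1,\ldots,a_n,a,b\in A$ for which
\[
t(p;x_{n+1})^{\mathcal{A}}(a_1,\ldots,a_n,a)\neq t(p;x_{n+1})^{\mathcal{A}}(a_1,\ldots,a_n,b).
\]
I aim to manufacture from these data a pair of assignments witnessing $x_{n+1}\in Ess(t(q;x_{n+1}),\mathcal{A})$, which is exactly the statement $q\in PEss(t,\Sigma)$.

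The central structural fact is a factorization of $t(p;x_{n+1})$ through $t(q;x_{n+1})$. Setting $s:=sub_t(q)$ and applying Remark~\ref{rem-assoc}(3), together with the tautology $t(q;sub_t(q))=t$, gives
\[
t(p;x_{n+1})=t(q;s)(qq';x_{n+1})=t\bigl(q;\,s(q';x_{n+1})\bigr).
\]
Thus $t(p;x_{n+1})$ is obtained from $t(q;x_{n+1})$ by substituting, at the unique position $q$, the term $s(q';x_{n+1})$ for the variable $x_{n+1}$; outside the subtree rooted at $q$ the two terms agree with $t$ and contain no occurrence of $x_{n+1}$. Consequently, letting $y_c:=s(q';x_{n+1})^{\mathcal{A}}(a_1,\ldots,a_n,c)$ for $c\in\{a,b\}$, evaluation factors as
\[
t(p;x_{n+1})^{\mathcal{A}}(a_1,\ldots,a_n,c)=t(q;x_{n+1})^{\mathcal{A}}(a_1,\ldots,a_n,y_c),
\]
and the assumed distinctness immediately forces both $y_a\neq y_b$ and the inequality $t(q;x_{n+1})^{\mathcal{A}}(a_1,\ldots,a_n,y_a)\neq t(q;x_{n+1})^{\mathcal{A}}(a_1,\ldots,a_n,y_b)$, supplying the required witness.

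The only delicate point is the evaluation-factorization in the last display; it is visually evident from the tree picture, but to make it formal I would argue by induction on the length of $q$ (equivalently, on the depth from the root of $t$ down to the node at position $q$), using that the value of an operation symbol in $\mathcal{A}$ depends only on the values at its immediate children and that the contexts surrounding position $q$ in $t(p;x_{n+1})$ and $t(q;x_{n+1})$ are identical and $x_{n+1}$-free. Once this bookkeeping is in place, the rest of the argument reduces to the single substitution step exhibited above.
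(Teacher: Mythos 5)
Your proof is correct and takes essentially the same route as the paper's: both write $p=qq'$ and exploit the factorization $t(p;\cdot)=t\bigl(q;\,sub_t(q)(q';\cdot)\bigr)$ from Remark~\ref{rem-assoc}(3) to transport the witness of essentiality from the position $p$ to its prefix $q$. The only difference is one of level: the paper transports a term witness $v\mapsto sub_t(q)(q';v)$ after invoking Theorem~\ref{t-EssSig}, whereas you transport element witnesses $c\mapsto y_c$ directly from the definition of $Ess(\cdot,\mathcal{A})$ via the substitution--evaluation lemma, which is the same idea carried out semantically rather than syntactically.
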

\begin{proof}\ Let $sub_t(q) = s$ and $sub_t(p)= r$. Now, $q\preceq
p$ implies that $r\in Sub(s)$ and  $Sub(r)\subset Sub(s).$ Let $n$
be a natural number such that $t\in W_\tau(X_n).$

 From $p\in
PEss(t,\Sigma)$ it follows that there is a term $v\in W_\tau(X)$
for which $v\neq x_{n+1}$ and
$$\Sigma\not\models t\approx t(p;x_{n+1})(x_{n+1}\leftarrow v),\quad\mbox{i.e.,}\quad
\Sigma\not\models t\approx t(p;v).$$

Consequently, there is an algebra $\mathcal{A}=\langle
A,\mathcal{F}\rangle$ of type $\tau$ such that
$$\mathcal{A}\models\Sigma\quad\mbox{and}\quad t^{\mathcal{A}}\neq t(p;v)^{\mathcal{A}}.$$
Let $m$ be a natural number such that $t\in W_\tau(X_m)$ and $v\in
W_\tau(X_m)$.

Let $(a_1,\ldots,a_m)\in A^m$ be  a tuple such that
$$t^{\mathcal{A}}(a_1,\ldots,a_n)\neq t(p;v)^{\mathcal{A}}(a_1,\ldots,a_m).$$

Let $u\in W_\tau(X_m)$ be the term $u=s(q';v),$ where
 $p=q\circ q'$ and $q'\in Pos(s).$ Hence we have
$\Sigma\models t(p;v)\approx t(q;u)$ and
$$t^{\mathcal{A}}(a_1,\ldots,a_n)\neq t(q;u)^{\mathcal{A}}(a_1,\ldots,a_m).$$
Consequently  $t^{\mathcal{A}}\neq t(q;u)^{\mathcal{A}}$, i.e.,
$$\Sigma\not\models t\approx t(q;x_{n+1})(x_{n+1}\leftarrow u)$$
and $q\in PEss(t,\Sigma)$.
 \end{proof}

 \begin{corollary}
 \label{c-EssPos2}
If $q\in PFic(t,\Sigma)$, then each position $p\in Pos(t)$ with
$q\preceq p$ is $\Sigma-$fictive for $t$.
\end{corollary}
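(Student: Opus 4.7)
The plan is to derive this corollary directly as the contrapositive of Theorem \ref{t-EssPos2}, without reworking any of the algebraic machinery. Since $PFic(t,\Sigma)$ is defined as $Pos(t)\setminus PEss(t,\Sigma)$, the statements ``$q$ is $\Sigma$-fictive for $t$'' and ``$q \notin PEss(t,\Sigma)$'' are literally interchangeable, so a contrapositive argument applies cleanly.

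Concretely, I would proceed by contradiction. Assume $q\in PFic(t,\Sigma)$ and suppose, for the sake of contradiction, that there exists some $p\in Pos(t)$ with $q\preceq p$ such that $p\in PEss(t,\Sigma)$. Then Theorem \ref{t-EssPos2}, applied to this essential position $p$, immediately forces every prefix of $p$ in $Pos(t)$ to be $\Sigma$-essential. In particular, $q\in PEss(t,\Sigma)$, which contradicts the assumption $q\in PFic(t,\Sigma)$. Hence no such $p$ can exist, and every $p\in Pos(t)$ with $q\preceq p$ lies in $PFic(t,\Sigma)$.

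There is essentially no obstacle here: the content of the corollary is exactly the logical dual of the theorem, and the prefix relation $\preceq$ is transitive in the only direction needed (a single step suffices, since $q\preceq p$ is assumed directly). The one small point worth being explicit about is that the statement of Theorem \ref{t-EssPos2} quantifies over $q\in Pos(t)$ with $q\preceq p$, so the $q$ playing the role of the hypothesized ``fictive'' position here genuinely belongs to the set over which that theorem quantifies; this makes the application of the theorem immediate rather than requiring an auxiliary argument about positions lying in $Pos(t)$.
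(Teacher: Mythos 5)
Your argument is correct and is exactly the intended one: the paper states this as an immediate corollary of Theorem \ref{t-EssPos2} (with no separate proof), obtained by contraposition since $PFic(t,\Sigma)=Pos(t)\setminus PEss(t,\Sigma)$. Your explicit contradiction argument, applying the theorem to the hypothesized essential position $p$ to force $q\in PEss(t,\Sigma)$, fills in that step correctly.
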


\begin{theorem}
\label{t-EssPos1} Let $t\in W_\tau(X)$ be a term of type $\tau$
and let $\Sigma\subseteq Id(\tau)$ be a set of identities of type
$\tau.$ If $p\in PFic(t,\Sigma)$,
 then
 $\Sigma\models t\approx t(p;v),$
for each term $v\in W_\tau(X).$
\end{theorem}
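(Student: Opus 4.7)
The plan is to reduce the statement to Theorem~\ref{t-EssSig} by introducing a fresh variable at the position $p$ and exploiting the fact that $\Sigma$-fictive positions are, by definition, positions where a fresh variable becomes $\Sigma$-fictive. Fix $n$ with $t\in W_\tau(X_n)$ and set $s:=t(p;x_{n+1})$. Since $x_{n+1}\notin var(t)$, the unique occurrence of $x_{n+1}$ in $s$ is at the position $p$, so the inductive substitution $s(x_{n+1}\leftarrow r)$ agrees with the positional substitution $t(p;r)$ for every term $r\in W_\tau(X)$. This correspondence is what makes the positional notion of fictiveness reducible to the variable notion.

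Next I would invoke the hypothesis: $p\in PFic(t,\Sigma)$ means, by Definition~\ref{d-EssPos1}(ii), that $x_{n+1}\in Fic(s,\Sigma)$. Taking the contrapositive of Theorem~\ref{t-EssSig}, this is equivalent to asserting that for every term $r\in W_\tau(X)$ and every algebra $\mathcal{A}\in\mathcal{A}lg(\tau)$ with $\mathcal{A}\models\Sigma$, one has $\mathcal{A}\models s\approx s(x_{n+1}\leftarrow r)$; equivalently, $\Sigma\models s\approx s(x_{n+1}\leftarrow r)$ for every term $r$.

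Specializing $r$ twice now finishes the argument. Choosing $r:=sub_t(p)$ rebuilds the original term, giving $\Sigma\models s\approx t$; choosing $r:=v$ yields $\Sigma\models s\approx t(p;v)$. Transitivity of $\approx$ in every model of $\Sigma$ then delivers $\Sigma\models t\approx t(p;v)$, as required.

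The only delicate point I expect is the bookkeeping in the first paragraph, namely verifying that $x_{n+1}$ really occurs only at the position $p$ in $s$, so that the inductive and positional substitutions coincide and Theorem~\ref{t-EssSig} can be applied verbatim. Once that identification is in place, the rest is a two-line specialization, and no new algebraic content is needed beyond what Theorem~\ref{t-EssSig} already provides.
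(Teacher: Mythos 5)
Your proof is correct and follows essentially the same route as the paper's: both introduce the fresh variable $x_{n+1}$ at the position $p$, identify the positional substitution $t(p;r)$ with the inductive substitution $t(p;x_{n+1})(x_{n+1}\leftarrow r)$, and reduce $\Sigma$-fictiveness of the position to $\Sigma$-fictiveness of $x_{n+1}$ in $t(p;x_{n+1})$. The only difference is packaging: you argue directly by citing Theorem~\ref{t-EssSig} and composing $\Sigma\models t(p;x_{n+1})\approx t$ with $\Sigma\models t(p;x_{n+1})\approx t(p;v)$ via transitivity, whereas the paper argues by contradiction and unfolds the semantic content of Theorem~\ref{t-EssSig} inline; this is not a substantive divergence.
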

\begin{proof}\  Let $p\in PFic(t,\Sigma)$ and let us suppose that the
theorem is false. Then  there is a term $v\in W_\tau(X)$ with $v\neq
sub_t(p)$, such that $\Sigma\not\models t\approx t(p;v).$ Let
$sub_t(p)=r$ and let $n$ be a natural number, such that $v, t\in
W_\tau(X_n)$. Then
$$t(p;v)=t(p;x_{n+1})(x_{n+1}\leftarrow v)\quad\mbox{ and }\quad
t=t(p;r)=t(p;x_{n+1})(x_{n+1}\leftarrow r).$$ Our supposition shows
that
$$\Sigma\not\models t\approx t(p;v)\ \iff\ \Sigma\not\models
t(p;x_{n+1})(x_{n+1}\leftarrow r)\approx
t(p;x_{n+1})(x_{n+1}\leftarrow v).$$

Hence there is an algebra $\mathcal{A}=\langle A,\mathcal{F}\rangle$
and $n+2$ elements $a_1,\ldots,a_n,a,b$ of $A$ such that
$$(t(p;x_{n+1}))^{\mathcal{A}}(a_1,\ldots,a_n,a)\neq (t(p;x_{n+1}))^{\mathcal{A}}(a_1,\ldots,a_n,b),$$
where $a=r^{\mathcal{A}}(a_1,\ldots,a_n)$ and
$b=v^{\mathcal{A}}(a_1,\ldots,a_n).$

 This means that $x_{n+1}\in Ess(t(p;x_{n+1}),\Sigma)$. Hence
$p\in PEss(t,\Sigma)$, which is a contradiction. \end{proof}
\begin{corollary}\label{c-EssPos1}
If $p\in Pos(t)$ is a $\Sigma-$fictive position for $t$, then $p$ is
fictive for $t$ with respect to  each algebra $\mathcal{A}$ with
$\mathcal{A}\models\Sigma.$
\end{corollary}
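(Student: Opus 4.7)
The plan is to argue by contrapositive, essentially unfolding the definitions of positional essentiality with respect to an algebra versus with respect to a set of identities. The corollary is really a direct consequence of the definitional link between $Ess(\cdot,\mathcal{A})$ and $Ess(\cdot,\Sigma)$ as stated in Definition \ref{d-EssSig}(ii) and Definition \ref{d-EssPos1}.

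In detail, I would fix $n \in \mathbb{N}$ such that $t \in W_\tau(X_n)$ (so that the variable $x_{n+1}$ does not occur in $t$), and suppose for contradiction that there is an algebra $\mathcal{A} \models \Sigma$ for which $p$ is not fictive for $t$ with respect to $\mathcal{A}$, i.e.\ $p \in PEss(t,\mathcal{A})$. By Definition \ref{d-EssPos1}(i) this means $x_{n+1} \in Ess(t(p;x_{n+1}),\mathcal{A})$. Since $\mathcal{A} \models \Sigma$, Definition \ref{d-EssSig}(ii) immediately yields $x_{n+1} \in Ess(t(p;x_{n+1}),\Sigma)$. Applying Definition \ref{d-EssPos1}(ii), this gives $p \in PEss(t,\Sigma)$, contradicting the hypothesis $p \in PFic(t,\Sigma)$.

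Alternatively, one could give a more computational proof using Theorem \ref{t-EssPos1}: if $p \in PFic(t,\Sigma)$, then $\Sigma \models t \approx t(p;v)$ for every $v \in W_\tau(X)$, hence for any $\mathcal{A} \models \Sigma$ and any $(a_1,\dots,a_n,a) \in A^{n+1}$ one has $t^{\mathcal{A}}(a_1,\dots,a_n) = (t(p;x_{n+1}))^{\mathcal{A}}(a_1,\dots,a_n,a)$ upon evaluating with $v = x_{n+1}$ interpreted as $a$; comparing two such evaluations with different values for $x_{n+1}$ shows $x_{n+1} \notin Ess(t(p;x_{n+1}),\mathcal{A})$. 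However, this route is heavier and requires a short justification that the identity $\Sigma \models t \approx t(p;x_{n+1})(x_{n+1} \leftarrow v)$ is what controls the evaluation of $t(p;x_{n+1})$ when its last argument varies.

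There is no real obstacle here; the only subtlety is making sure $x_{n+1}$ is genuinely a fresh variable (so that $t(p;x_{n+1})(x_{n+1} \leftarrow v) = t(p;v)$), which is handled by choosing $n$ with $t \in W_\tau(X_n)$. I would present the short contrapositive argument for clarity.
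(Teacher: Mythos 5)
Your proof is correct. The paper states this as a corollary of Theorem \ref{t-EssPos1} and gives no explicit argument, so the comparison is necessarily with the implied justification. Your primary route --- the contrapositive obtained by unfolding Definition \ref{d-EssPos1} and the existential quantifier in the definition of $\Sigma$-essential variables --- is the cleanest possible argument: if $p\in PEss(t,\mathcal{A})$ for even one $\mathcal{A}\models\Sigma$, then $x_{n+1}\in Ess(t(p;x_{n+1}),\mathcal{A})$ forces $x_{n+1}\in Ess(t(p;x_{n+1}),\Sigma)$ and hence $p\in PEss(t,\Sigma)$. This shows the corollary does not actually depend on Theorem \ref{t-EssPos1} at all, only on the definitions, which is a slightly stronger observation than the paper's presentation suggests. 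Your alternative route through Theorem \ref{t-EssPos1} matches the paper's positioning of the statement but, as you note, is heavier. Your care about choosing $n$ with $t\in W_\tau(X_n)$ so that $x_{n+1}$ is fresh is appropriate and consistent with how the paper handles the same issue in the proofs of Theorems \ref{t-EssPos2} and \ref{t-EssPos1}.
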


\begin{corollary}\label{c-EssPos3}
If $p\in PEss(t,\Sigma)$,  $t\in W_\tau(X_n)$, then $p$ is essential
for $t$ with respect to  each $Mod(\Sigma)-$free algebra with at least
$n+1$ free generators.
\end{corollary}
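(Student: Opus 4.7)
The plan is to reduce this corollary to Corollary~\ref{c-ess2}, which is the variable-version of the same statement, using Definition~\ref{d-EssPos1} as the bridge between essential positions and essential variables.

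First, I would unfold the hypothesis. By Definition~\ref{d-EssPos1}(ii), the assumption $p\in PEss(t,\Sigma)$ is, by definition, equivalent to $x_{n+1}\in Ess(t(p;x_{n+1}),\Sigma)$. Setting $t':=t(p;x_{n+1})$, this is a term in $W_\tau(X_{n+1})$ whose $(n+1)$-st variable is $\Sigma$-essential.

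Next, I would apply Corollary~\ref{c-ess2} directly to $t'$ and the variable $x_{n+1}$. That corollary asserts that $\Sigma$-essentiality of a variable in a term is witnessed on every sufficiently large $Mod(\Sigma)$-free algebra, so I obtain that $x_{n+1}\in Ess(t',\mathcal{A})$ for every $Mod(\Sigma)$-free algebra $\mathcal{A}$ with enough free generators to test this. Finally, re-reading this conclusion through Definition~\ref{d-EssPos1}(i), the statement $x_{n+1}\in Ess(t(p;x_{n+1}),\mathcal{A})$ is literally the definition of $p\in PEss(t,\mathcal{A})$.

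I do not expect a genuine obstacle here: the proof is a two-step chain of definitional unfoldings sandwiching one invocation of the already-established variable version. The only point that warrants care is bookkeeping about free generators, since $t'=t(p;x_{n+1})$ introduces the fresh variable $x_{n+1}$ in addition to those already occurring in $t\in W_\tau(X_n)$; one must check that the stated bound ``$n+1$ free generators'' is enough to accommodate both the original variables of $t$ and the fresh $x_{n+1}$ used by the positional composition when applying Corollary~\ref{c-ess2}.
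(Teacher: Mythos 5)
Your reduction to Corollary~\ref{c-ess2} via Definition~\ref{d-EssPos1} is the natural route (the paper gives no explicit proof of this corollary, and this is surely the intended one), but the bookkeeping point you flag at the end is not a formality --- as written, your argument does not deliver the stated bound. Applying Corollary~\ref{c-ess2} to $t':=t(p;x_{n+1})$, which lives in $W_\tau(X_{n+1})$, yields essentiality of $x_{n+1}$ in every $Mod(\Sigma)$-free algebra with at least $(n+1)+1=n+2$ free generators, whereas the corollary claims $n+1$ suffice. So the two-step chain of unfoldings, taken literally, proves a strictly weaker statement.

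The bound $n+1$ is nevertheless correct, but one extra observation is needed: one of the two values distinguishing the $(n+1)$-st argument of $t'$ can be taken inside the subalgebra generated by the first $n$ generators, namely the value of the replaced subterm itself. Concretely, from $p\in PEss(t,\Sigma)$ one first deduces $\Sigma\not\models t\approx t(p;x_{n+1})$: if $(t')^{\mathcal A}(\vec a,a)\neq(t')^{\mathcal A}(\vec a,b)$, then at least one of the two sides differs from $(t')^{\mathcal A}(\vec a,c)=t^{\mathcal A}(\vec a)$, where $c=sub_t(p)^{\mathcal A}(\vec a)$. Then, in the $Mod(\Sigma)$-free algebra $F$ on generators $y_1,\ldots,y_{n+1}$, the elements $(t')^F(\vec y,y_{n+1})$ and $(t')^F\bigl(\vec y,\,sub_t(p)^F(\vec y)\bigr)=t^F(\vec y)$ are the $\Sigma$-classes of $t(p;x_{n+1})$ and of $t$, hence distinct; this exhibits $x_{n+1}\in Ess(t',F)$, i.e.\ $p\in PEss(t,F)$, using only $n+1$ free generators, because $sub_t(p)\in W_\tau(X_n)$ is evaluated at the first $n$ generators rather than at a fresh one. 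Without this step you should be claiming $n+2$ generators, not $n+1$.
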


If $\Sigma\models t\approx s$ and $s\in Sub(t)$ is a proper subterm
of $t$, one might expect that the positions of $t$ which are
``outside" of $s$ have to be $\Sigma-$fictive. To see that this is
not true, we consider the set of operations $\vee,\wedge$ and $\neg$
with type $\tau:=(2,2,1)$. Let $\Sigma$ be the set of identities
satisfied in a Boolean algebra. Then it is easy to prove that if
$t=x_1\wedge(x_2\vee\neg(x_2))$, then we have $\Sigma\models
t\approx x_1$, but $PEss(t,\Sigma)=Pos(t).$

Now, we are going to generalize composition of terms and to describe
the  corresponding deductive systems.

Let $\Sigma$ be a set of identities of type $\tau$. Two terms $t$
and $s$ are called \emph{$\Sigma$-equivalent} (briefly, \emph{$\Sigma$-equal}) if $\Sigma\models t\approx s$.

\begin{definition}\label{d-SigmaComp}
Let $t,r,s\in W_\tau(X)$ and $\Sigma S_r^t=\{v\in Sub(t) \mid 
\Sigma\models r\approx v \}$ be the set of all subterms of $t$ which
are $\Sigma-$equal to $r$.

 Term
$\Sigma-$composition of $t$ and $r$ by $s$ is defined as follows
\begin{enumerate}
 \item[(i)]$t^\Sigma( r\leftarrow s)=t$\ \  if\  \ $\Sigma S_r^t=\emptyset$;

\item[(ii)] $t^\Sigma( r\leftarrow s)=s$\ \  if\ \  $\Sigma\models t\approx
r$,  and

\item[(iii)] $t^\Sigma(r\leftarrow s)=f(t_1^\Sigma(r\leftarrow
s),\ldots,t_n^\Sigma(r\leftarrow s)),$\ \  if\ \
$t=f(t_1,\ldots,t_n)$.
\end{enumerate}
\end{definition}

Let $\Sigma P_r^t=\{p\in Pos(t) \mid sub_t(p)\in \Sigma S_r^t\}$ be
the set of all positions of subterms of $t$ which are $\Sigma-$equal
to $r$. Let $P_r^t=\{p_1,\ldots,p_m\}$ be the set of  all the
minimal elements in $\Sigma P_r^t$ with respect to the ordering
$\prec$ in the set of positions, i.e., $p\in P_r^t$ if for each $q\in
P_r^t$ we have $q\not\prec p$. Let $ r_j=sub_t(p_j)$ for
$j=1,\ldots,m$. Clearly,
$$t^\Sigma(r\leftarrow s)= t(P_r^t;s).$$
\begin{example}~\label{ex3}\rm
Let us consider the set $\Sigma$ of identities satisfied in the
variety $RB$ of rectangular bands (see Example \ref{e-EssPoss1}).
Let $r=f(x_1,x_2)$ and let the terms $t$ and $u$ be the same as in
Example \ref{ex1}. Then we have $$\Sigma S_r^t=\{f(x_1,x_2),
f(f(x_1,x_2),x_2)\},\ \Sigma P_r^t=\{1211,121\}, P_r^t=\{121\}$$ and
$t^\Sigma(r\leftarrow u)=t(121;u)$ (see Figure \ref{f1}).

 So, the term
$t^\Sigma(r\leftarrow u)$ is the term obtained from $t$ by replacing
$r$ by $u$ at any minimal positions whose subterm is $\Sigma$-equal
to $r$, where minimality refers to the order $\preceq$ on the set of
positions.
\end{example}

\begin{proposition}\label{p-Sigma}
If $\Sigma\models r\approx v$, and $u,t\in W_\tau(X)$ then:
\begin{enumerate}

\item[(i)] $\Sigma\models t^\Sigma(u\leftarrow u)\approx t$;

\item[(ii)]  $P^t_r=P^t_v$;

\item[(iii)] $\Sigma\models t^\Sigma(r\leftarrow u)\approx t^\Sigma(v\leftarrow u)$.
\end{enumerate}

\end{proposition}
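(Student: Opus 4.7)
The three parts can be handled separately, with (ii) being the engine that drives (iii), and (i) requiring a short induction.

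For part (i), my plan is a straightforward structural induction on the term $t$. In the base case $t \in X \cup \mathcal{F}_0$, either $\Sigma S_u^t = \emptyset$ and $t^\Sigma(u \leftarrow u) = t$, or $\Sigma \models t \approx u$, in which case $t^\Sigma(u \leftarrow u) = u$ and we use $\Sigma \models u \approx t$. In the inductive step $t = f(t_1,\dots,t_n)$, the only nontrivial subcase is when $\Sigma S_u^t \neq \emptyset$ and $\Sigma \not\models t \approx u$, where by Definition~\ref{d-SigmaComp}(iii) we have $t^\Sigma(u\leftarrow u) = f(t_1^\Sigma(u\leftarrow u),\ldots,t_n^\Sigma(u\leftarrow u))$. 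The induction hypothesis gives $\Sigma \models t_i^\Sigma(u\leftarrow u) \approx t_i$ for each $i$, and because $\Sigma$-equality is a congruence (i.e.\ $\Sigma\models$ is closed under the replacement rule in equational logic), we may substitute to conclude $\Sigma \models t^\Sigma(u\leftarrow u) \approx t$.

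Part (ii) is essentially unpacking the definition. By hypothesis $\Sigma \models r \approx v$, so for any $w \in Sub(t)$ the transitivity and symmetry of $\Sigma$-equality give $\Sigma \models r \approx w \iff \Sigma \models v \approx w$. Hence $\Sigma S_r^t = \Sigma S_v^t$, which immediately yields $\Sigma P_r^t = \Sigma P_v^t$, and taking the minimal elements with respect to $\prec$ gives $P_r^t = P_v^t$.

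Part (iii) is then a one-line consequence. Using the identification $t^\Sigma(r\leftarrow u) = t(P_r^t;u)$ noted in the excerpt just before Example~\ref{ex3}, together with part (ii), we get
\[
t^\Sigma(r\leftarrow u) = t(P_r^t;u) = t(P_v^t;u) = t^\Sigma(v\leftarrow u),
\]
so the two terms are in fact syntactically identical, and certainly $\Sigma$-equal.

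The main point that requires a bit of care is the inductive step in (i): I must be sure that the recursive definition only replaces $\Sigma$-equal occurrences at minimal positions, so that the local change at each such position is of a subterm $w$ with $\Sigma \models w \approx u$ being swapped for $u$, and hence preserves $\Sigma$-equality under the congruence rule. Parts (ii) and (iii) should then go through without any real obstacle, being pure manipulations with the sets $\Sigma S^t_\bullet$ and $P^t_\bullet$.
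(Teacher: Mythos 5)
Your proposal is correct in all three parts. The only substantive difference from the paper lies in part (i): the paper argues directly at the level of positions, taking each minimal position $p_i\in P^t_u$, invoking $\Sigma\models sub_t(p_i)\approx u$ together with rule $D_5$ to get $\Sigma\models t(p_i;u)\approx t$, and then combining the single-position replacements (via the properties in Remark \ref{rem-assoc}) to conclude $\Sigma\models t(P^t_u;u)\approx t$; you instead run a structural induction following the recursive clauses of Definition \ref{d-SigmaComp}, pushing the claim through $t=f(t_1,\ldots,t_n)$ by the congruence property of $\models$. The two arguments rest on exactly the same fact --- that every occurrence actually replaced is $\Sigma$-equal to $u$ and that $\Sigma$-equality is preserved under positional replacement --- but your induction is arguably the cleaner packaging: it avoids the paper's somewhat awkward case split on whether a minimal position $p_i$ lies above some non-minimal $q_j\in\Sigma P^t_u\setminus P^t_u$ (and the paper's opening inference that $t^\Sigma(u\leftarrow u)\neq t$ forces $\Sigma P^t_u\setminus P^t_u\neq\emptyset$ is itself questionable, a step your route simply never needs). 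For (ii) and (iii) the paper says only that they are clear, and your write-up supplies precisely the intended details: $\Sigma S^t_r=\Sigma S^t_v$ by symmetry and transitivity, hence $P^t_r=P^t_v$, hence $t^\Sigma(r\leftarrow u)=t(P^t_r;u)=t(P^t_v;u)=t^\Sigma(v\leftarrow u)$ as literally the same term.
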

\begin{proof}\ 
(i) \ If $t^\Sigma(u\leftarrow u)= t$, then the proposition
is obvious. Let us assume that $t^\Sigma(u\leftarrow u)\neq t$.
Hence $\Sigma P^t_u\setminus P^t_u=\{q_1,\ldots,
q_k\}\neq\emptyset$. Let $P^t_u=\{p_1,\ldots, p_m\}$ and $p_i\in
P^t_u$. If $p_i\not\prec q_j$ for all $j\in\{1,\ldots,k\}$, then
since $\Sigma\models sub_t(p_i)\approx u$ and $D_5$ we obtain
 $\Sigma\models t(p_i;u)\approx t.$
 If $p_i\prec q_j$, for some $j\in\{1,\ldots,k\}$,
then we have $\Sigma\models sub_t(p_i)\approx sub_t(q_j)\approx u$.
Since $t=t(p_i;sub_t(p_i))=t(p_i;sub_t(p_i))(q_j;sub_t(q_j))$ we obtain   $\Sigma\models t(p_i;u)\approx t.$
Finally, we have $\Sigma\models t^\Sigma(u\leftarrow u)\approx t$.

(ii) and (iii) are clear.
\end{proof}

\begin{corollary}\label{c-Sigma1}
\begin{enumerate}

\item[(i)] $P^{t^\Sigma(u\leftarrow u)}_u=\Sigma P^{t^\Sigma(u\leftarrow u)}_u$;

\item[(ii)] $t^\Sigma(u\leftarrow u)^\Sigma(u\leftarrow
v)=t^\Sigma(u\leftarrow u)(u\leftarrow v)$ for any term $v\in
W_\tau(X)$.
\end{enumerate}

\end{corollary}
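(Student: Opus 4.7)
The plan is to analyse the term $t':=t^\Sigma(u\leftarrow u)=t(P_u^t;u)$, where the set $P_u^t=\{p_1,\ldots,p_m\}$ is the antichain of minimal positions in $\Sigma P_u^t$. At each $p_i$ the subterm of $t'$ is literally $u$, while positions outside the inserted copies inherit their subterm from $t$, up to further replacements at the other $p_j$'s. Both parts reduce to a case analysis of a position $q\in Pos(t')$ according to its relation to the $p_i$: strictly above some $p_i$, incomparable with every $p_i$, equal to some $p_i$, or strictly below some $p_i$.

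For part (i), the containment $P_u^{t'}\subseteq\Sigma P_u^{t'}$ is immediate from the definition. For the converse, I would pick $q\in\Sigma P_u^{t'}$ and show that $q$ is minimal. When $q\prec p_i$ or $q$ is incomparable with every $p_i$, Proposition~\ref{p-Sigma} transports the $\Sigma$-equivalence between $sub_{t'}(q)$ and $sub_t(q)$, forcing $sub_t(q)\approx_\Sigma u$ and contradicting the minimality of the $p_i$ in $\Sigma P_u^t$. When $q=p_i$, minimality holds because the $p_i$ are pairwise incomparable. The delicate case is $q\succ p_i$: here $sub_{t'}(q)$ is a proper subterm of the freshly inserted copy of $u$, and the argument must rule out such a subterm being $\Sigma$-equal to $u$ by exploiting the construction of $t'$ together with the minimality condition built into the definition of $P_u^t$.

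For part (ii), the same case analysis shows that the only syntactic occurrences of $u$ in $t'$ are the $p_i$ themselves: no proper subterm of the finite term $u$ can be syntactically equal to $u$, and Proposition~\ref{p-Sigma}(i) together with the minimality of the $p_i$ in $\Sigma P_u^t$ rules out the remaining cases. By Remark~\ref{rem-assoc}(4), the inductive composition $t'(u\leftarrow v)$ therefore equals $t'(p_1,\ldots,p_m;v)$. Combined with the identification $P_u^{t'}=\{p_1,\ldots,p_m\}$ coming from part (i), this yields
\[
t'^\Sigma(u\leftarrow v) \;=\; t'(P_u^{t'};v) \;=\; t'(p_1,\ldots,p_m;v) \;=\; t'(u\leftarrow v).
\]
The principal obstacle is the descendant case in part (i); everything else is routine bookkeeping driven by the minimality condition on $P_u^t$ and Proposition~\ref{p-Sigma}.
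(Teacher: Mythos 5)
The paper offers no argument for this corollary at all---it is presented as an immediate consequence of Proposition \ref{p-Sigma}---so the only thing to assess is whether your attempt closes the gaps the paper leaves open. Your handling of part (i) in the cases $q\prec p_i$, $q$ incomparable with every $p_i$, and $q=p_i$ is sound. The problem is exactly the case you yourself single out as delicate, $q\succ p_i$: you say the argument ``must rule out such a subterm being $\Sigma$-equal to $u$,'' but you never supply that argument, and in fact none exists, because nothing prevents a proper subterm of $u$ from being $\Sigma$-equal to $u$. Take $\tau=(2)$, $\Sigma=\{f(x_1,x_2)\approx x_1\}$ (the left-zero identity defining the variety $L0$ of Remark \ref{r2}) and $u=t=f(x_1,x_2)$. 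Then $t^\Sigma(u\leftarrow u)=u$, and in $t'=u$ the position $1$ carries the subterm $x_1$ with $\Sigma\models u\approx x_1$, so $\Sigma P^{t'}_u=\{\varepsilon,1\}$ while $P^{t'}_u=\{\varepsilon\}$. Hence part (i) as literally stated fails, and the step you left open is unfillable; what your case analysis actually proves is the weaker (and correct) identity $P^{t'}_u=P^{t}_u=\{p_1,\dots,p_m\}$, i.e., the set of minimal positions is unchanged.

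Part (ii), by contrast, is true, and your argument for it essentially works---but you should decouple it from part (i). All that (ii) needs is that the minimal elements of $\Sigma P^{t'}_u$ and the syntactic occurrences of $u$ in $t'$ are both exactly $\{p_1,\dots,p_m\}$. A position strictly inside an inserted copy of $u$ may well lie in $\Sigma P^{t'}_u$ (as the example shows), but it is never minimal there, since it sits strictly above $p_i\in\Sigma P^{t'}_u$, and it never carries a syntactic copy of $u$, since a term is not a proper subterm of itself. Your own case analysis already establishes both facts; citing them directly, rather than the false antichain statement (i), makes your derivation of (ii) via Remark \ref{rem-assoc} correct and self-contained.
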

 \begin{proposition}\label{p-PFic}
If $\Sigma\models t\approx s$ and $\Sigma\models r\approx v$, then
$$P^t_r\subseteq PFic(t,\Sigma)\quad\iff\quad P^s_v\subseteq PFic(s,\Sigma).$$
\end{proposition}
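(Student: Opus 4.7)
By the symmetry of the hypotheses (both $\Sigma\models t\approx s$ and $\Sigma\models r\approx v$ are symmetric in their two arguments), it suffices to prove one implication, say $P^t_r\subseteq PFic(t,\Sigma)\Rightarrow P^s_v\subseteq PFic(s,\Sigma)$. Fix $p\in P^s_v$ and an arbitrary $w\in W_\tau(X)$; by Theorem~\ref{t-EssPos1} the conclusion $p\in PFic(s,\Sigma)$ reduces to showing $\Sigma\models s\approx s(p;w)$, which using $\Sigma\models t\approx s$ and transitivity is equivalent to $\Sigma\models t\approx s(p;w)$.

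I would argue semantically. Fix $\mathcal{A}\in Mod(\Sigma)$ and a valuation $\vec a$, and write $R:=r^{\mathcal{A}}(\vec a)=v^{\mathcal{A}}(\vec a)$. Every subterm $\Sigma$-equivalent to $r$ evaluates to $R$ at $\vec a$; in particular $sub_s(p)^{\mathcal{A}}(\vec a)=R$, and $sub_t(q)^{\mathcal{A}}(\vec a)=R$ for every $q\in\Sigma P^t_r$. By Corollary~\ref{c-EssPos2} the hypothesis $P^t_r\subseteq PFic(t,\Sigma)$ propagates to $\Sigma P^t_r\subseteq PFic(t,\Sigma)$, so $t^{\mathcal{A}}(\vec a)$ is invariant under replacing the subterm value at any such position by any $c\in A$. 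Since $t^{\mathcal{A}}=s^{\mathcal{A}}$ as $n$-ary operations, this is really a property of the common operation rather than of the chosen syntactic representative.

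To transfer the invariance from $t$ to $s$ I would invoke Corollary~\ref{c-EssPos3} and work in a $Mod(\Sigma)$-free algebra $\mathcal{F}$ with sufficiently many free generators. There $[t]_\Sigma=[s]_\Sigma$ and $[r]_\Sigma=[v]_\Sigma$, so the hypothesis becomes a statement about the pair $([t]_\Sigma,[r]_\Sigma)$: the element $[t]_\Sigma$ is unaffected when one occurrence of an $[r]_\Sigma$-subterm is re-evaluated to an arbitrary element of $\mathcal{F}$. Minimality of $p$ in $\Sigma P^s_v$ guarantees that $sub_s(p)$ is the topmost occurrence of an $[r]_\Sigma$-subterm along its branch in $s$, so the same invariance applies at $p$; unfolding the free-algebra statement back gives $[s(p;w)]_\Sigma=[s]_\Sigma$ for any $w$, i.e.\ $\Sigma\models s\approx s(p;w)$.

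\textbf{Main obstacle.} The delicate point is that positions in $t$ and in $s$ bear no natural syntactic correspondence under $\Sigma$-equivalence, so $\Sigma$-fictiveness cannot be transferred directly position-by-position. A tempting inductive strategy using Remark~\ref{rem-assoc} to reduce to one-at-a-time replacements stumbles on the fact that $\Sigma$-equality is not in general preserved by simultaneous positional replacement, even when every individual replacement does preserve it. Passing to the free algebra and letting the minimality hypothesis on $P^s_v$ do the bookkeeping (so that no $[r]_\Sigma$-subterm is double-counted) is the cleanest way around this impasse.
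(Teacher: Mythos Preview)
The paper states this proposition without proof, so there is no argument to compare against. Your attempt, however, has a genuine gap, and in fact the proposition as written appears to be false.

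The gap is in the transfer step. You claim that in the free $Mod(\Sigma)$-algebra ``the hypothesis becomes a statement about the pair $([t]_\Sigma,[r]_\Sigma)$: the element $[t]_\Sigma$ is unaffected when one occurrence of an $[r]_\Sigma$-subterm is re-evaluated to an arbitrary element of $\mathcal{F}$''. But an element of the free algebra carries no positional structure; ``occurrence of a subterm'' is a property of a chosen syntactic representative, not of the equivalence class. The hypothesis $P^t_r\subseteq PFic(t,\Sigma)$ speaks about positions in the specific term $t$, and passing to $[t]_\Sigma$ discards exactly that information. Minimality of $p$ in $\Sigma P^s_v$ only organises positions \emph{within} $s$; it provides no link to positions in $t$, so nothing justifies the claim that ``the same invariance applies at $p$''.

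Concretely, take $\Sigma$ to be the identities of semigroups, $t=f(x_1,f(x_2,x_3))$, $s=f(f(x_1,x_2),x_3)$, and $r=v=f(x_1,x_2)$. Then $\Sigma\models t\approx s$ and $\Sigma\models r\approx v$. No subterm of $t$ is $\Sigma$-equal to $f(x_1,x_2)$ (compare variable sequences), so $\Sigma P^t_r=\emptyset$ and $P^t_r=\emptyset\subseteq PFic(t,\Sigma)$ holds vacuously. On the other hand $P^s_v=\{1\}$, and in the variety of semigroups every position is $\Sigma$-essential (as the paper itself notes in the proof of Theorem~\ref{t-TCclosure}), so $1\notin PFic(s,\Sigma)$ and $P^s_v\not\subseteq PFic(s,\Sigma)$. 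The equivalence therefore fails. Your free-algebra heuristic cannot detect this: the vacuous hypothesis on the $t$-side yields no invariance whatsoever to transfer to $s$.
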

Next we consider a deductive system, which is based on the
$\Sigma$-compositions of terms.

\section{Stable varieties and globally invariant congruences}\label{sec3}

 Our next goal is to introduce deductive  closures
 on the subsets of $Id(\tau)$ which generate
elements of the lattices ${\mathcal L}(\tau)$ and ${\mathcal
E}(\tau)$.
 These closures are  based on two concepts -
  satisfaction of an identity by a variety and   deduction
of an identity.

\begin{definition}
\label{d-closDeduct}\cite{bur} A set $\Sigma$ of identities of type
$\tau$ is $D-$deductively closed if it satisfies the following
axioms (some authors call them ``deductive rules", ``derivation
rules", ``productions", etc.):
\begin{enumerate}

\item[$D_1$] {(reflexivity)} $t\approx t\in \Sigma$ for
each term $t\in W_\tau(X)$;

\item[$D_2$] {(symmetry)}  $(t\approx s\in \Sigma)\  \Rightarrow\
s\approx t\in \Sigma$;

\item[$D_3$]  {(transitivity)} $(t\approx s\in \Sigma)\  \&\ (s\approx
r\in \Sigma)\  \Rightarrow\  t\approx r\in \Sigma$;

\item[$D_4$] {(variable inductive substitution)} \\ $(t\approx s\in
\Sigma)\ \&\ (r\in W_\tau(X))\   \Rightarrow\ t(x\leftarrow
r)\approx s(x\leftarrow r)\in\Sigma$;

\item[$D_5$] {(term positional replacement)} \\
$(t\approx s\in \Sigma)\ \&\ (r\in W_\tau(X))\ \&\ (sub_r(p)=t)\
\Rightarrow\ r(p;s)\approx r\in\Sigma$.
\end{enumerate}
\end{definition}
For any set of identities $\Sigma$ the smallest $D-$deductively
closed set containing $\Sigma$ is called the $D-$closure of $\Sigma$
and it is denoted by $D(\Sigma).$

Let $\Sigma$ be a set of identities of type $\tau.$ For $t\approx
s\in Id(\tau)$ we say $\Sigma\vdash t\approx s$ (``$\Sigma$
$D-$proves $t\approx s$")  if there is a sequence of identities
($D-$deduction) $t_1\approx s_1,\ldots,t_n\approx s_n$, such that
each identity belongs to $\Sigma$ or is a result of applying any of
the derivation rules $D_1-D_5$  to previous identities in the
sequence and the last identity $t_n\approx s_n$ is $t\approx s.$

According to \cite{bur},  $\Sigma\models t\approx s$ if and only if
$\ t\approx s\in D(\Sigma)$ and the closure $D(\Sigma)$ is a
 fully invariant congruence for each set $\Sigma$ of identities  of a given
type. It is known that
    there exists a variety
$V\subset \mathcal{A} lg(\tau)$ with $Id(V)=\Sigma$ if and only if
$\Sigma$ is  a fully invariant congruence (Theorem 14.17
\cite{bur}).

Using properties of the essential variables and positions we can
divide the  rules $D_4$ and $D_5$ into four rules which distinguish
between operating with essential or fictive objects in the
identities.
\begin{proposition}\label{p-EssDeriv}
A set $\Sigma$ is $D-$deductively closed if it satisfies rules
$D_1,D_2,D_3$ and
\begin{enumerate}

\item[$D_4'$] {(essential variable inductive substitution)} \\
$(t\approx s\in \Sigma)\ \&\ (r\in W_\tau(X))\ \&\ (x\in
Ess(t,\Sigma))\ \Rightarrow\ t(x\leftarrow r)\approx s(x\leftarrow
r)\in\Sigma$;

\item[$D_4''$] {(fictive variable inductive substitution)} \\
$(t\approx s\in \Sigma)\ \&\ (r\in W_\tau(X))\ \&\ (x\in
Fic(t,\Sigma))\ \Rightarrow\ t(x\leftarrow r)\approx s\in\Sigma$;

\item[$D_5'$] {(essential positional term replacement)} \\
$(t\approx s\in \Sigma)\ \&\  (sub_r(p)=t,\ p\in PEss(r,\Sigma))\
\Rightarrow\ r(p;s)\approx r\in\Sigma;$

\item[$D_5''$] {(fictive positional term replacement)} \\
$(t,s,r\in W_\tau(X))\ \&\ (sub_r(p)=t,\ p \in PFic(r,\Sigma))\
\Rightarrow\ r(p;s)\approx r\in\Sigma$.
\end{enumerate}
\end{proposition}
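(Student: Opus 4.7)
The plan is to show that the derivation system $\{D_1, D_2, D_3, D_4', D_4'', D_5', D_5''\}$ is at least as strong as $\{D_1, D_2, D_3, D_4, D_5\}$; since the primed rules are self-evidently weaker than $D_4$ and $D_5$, the two systems will then generate the same closure. Hence it suffices to derive $D_4$ and $D_5$ from the primed rules under the assumption that $\Sigma$ is closed under $D_1$ through $D_3$ plus the primed rules.

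The derivation of $D_5$ is immediate by case analysis on the position $p$. Given $t \approx s \in \Sigma$, $r \in W_\tau(X)$ and $sub_r(p) = t$: if $p \in PEss(r, \Sigma)$, rule $D_5'$ directly produces $r(p;s) \approx r \in \Sigma$; if $p \in PFic(r, \Sigma)$, rule $D_5''$ gives the same conclusion (and in fact $D_5''$ does not require the hypothesis $t \approx s \in \Sigma$ at all, consistently with Theorem \ref{t-EssPos1}). Since $Pos(r) = PEss(r,\Sigma) \cup PFic(r,\Sigma)$, this exhausts the possibilities.

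For $D_4$, fix $t \approx s \in \Sigma$, a variable $x$, and $r \in W_\tau(X)$. I split on whether $x \in Ess(t, \Sigma)$ or $x \in Fic(t, \Sigma)$. In the essential case the rule $D_4'$ produces $t(x \leftarrow r) \approx s(x \leftarrow r) \in \Sigma$ in a single step. In the fictive case I first apply $D_4''$ to $t \approx s$ to obtain $t(x \leftarrow r) \approx s \in \Sigma$. The critical ingredient here is Corollary \ref{c-EssId2}, which guarantees that $x \in Fic(s, \Sigma)$ as well. Applying $D_4''$ to the reflexive identity $s \approx s$ (available via $D_1$) then yields $s(x \leftarrow r) \approx s \in \Sigma$. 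Two applications of $D_2$ followed by $D_3$ now combine these to give $t(x \leftarrow r) \approx s(x \leftarrow r) \in \Sigma$, completing the derivation of $D_4$.

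The main obstacle lies precisely in the fictive case of $D_4$: the conclusion of $D_4''$ is asymmetric (``$t(x \leftarrow r) \approx s$''), whereas $D_4$ demands a symmetric conclusion, and bridging this gap depends on transferring the fictiveness of $x$ from $t$ to $s$. Without Corollary \ref{c-EssId2} that transfer is unavailable, and the reduction fails. Degenerate edge cases (such as $x \notin var(t)$) can be absorbed into the fictive case by the standard convention that any variable absent from $t$ is automatically $\Sigma$-fictive for $t$, together with the analogous observation for $s$ via Corollary \ref{c-ess1}.
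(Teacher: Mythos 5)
The paper states Proposition~\ref{p-EssDeriv} without proof, so there is nothing to compare against; your argument is correct and is surely the intended one. The reduction of $D_5$ to $D_5'$/$D_5''$ via the dichotomy $Pos(r)=PEss(r,\Sigma)\cup PFic(r,\Sigma)$ is immediate, and you correctly isolate the one non-trivial point: in the fictive case of $D_4$ the asymmetric conclusion of $D_4''$ must be symmetrized by applying $D_4''$ to $s\approx s$ (available by $D_1$), which requires transferring $x\in Fic(t,\Sigma)$ to $x\in Fic(s,\Sigma)$ via Corollary~\ref{c-EssId2}, and then closing with $D_2$ and $D_3$. The only blemish is the citation in your final sentence: for a variable $x\notin var(t)$ the fact you need on the $s$-side is again Corollary~\ref{c-EssId2} (or simply that $s^{\mathcal A}=t^{\mathcal A}$ cannot depend on an argument on which $t^{\mathcal A}$ does not), not Corollary~\ref{c-ess1}, which is a semantic consequence statement rather than a transfer of fictiveness; this slip does not affect the argument.
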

We will say that a  set $\Sigma$ of identities is complete if
$D(\Sigma)=Id(\tau)$. It is clear that if $\Sigma$ is a complete
set, then $Mod(\Sigma)$ is a trivial variety.

For fictive positions in terms and   complete sets of identities,
we have: $$\Sigma\ \text{is complete} \ \iff (\forall t\in
W_\tau(X)) \ \ Pos(t)=PFic(t,\Sigma).$$

Let $\Sigma$ be a  non-complete set  of identities. Then from
Theorem \ref{t-EssSig} and Theorem \ref{t-EssPos1}, it follows that
when applying the rules $D_4''$ and $D_5''$,  we can skip these
steps in the deduction process, without any reflection  on the
resulting identities. Hence, if $\Sigma$ is  a non-complete set of
identities, then  $\Sigma$ is $D-$deductively closed if it satisfies
the rules $D_1,D_2,D_3,D_4',D_5'.$

In order to obtain new elements in the lattices $\mathcal{L}(\tau)$
and $\mathcal{E}(\tau)$, we have to extend the derivation rules
$D_1-D_5$.

\begin{definition}\label{d-TSR}
A set   $\Sigma$ of identities  is $\Sigma R$-deductively closed if
it satisfies the rules $D_1,D_2,D_3,D_4$ and
\begin{enumerate}

\item[$\Sigma R_1$] \emph{($\Sigma$  replacement)}
\begin{center}
\begin{tabular}{l}$
 (t\approx s, r\approx v, u\approx w\in
\Sigma)
                \&\ (r\in SEss(t,\Sigma))\ \&\ (v\in SEss(s,\Sigma))$\\
  $ 
\Rightarrow t^\Sigma(r\leftarrow u)\approx
s^\Sigma(v\leftarrow w)\in\Sigma.$\end{tabular}
\end{center}
\end{enumerate}
\end{definition}

For any set of identities $\Sigma$ the smallest $\Sigma
R-$deductively closed set containing $\Sigma$ is called $\Sigma
R-$closure of $\Sigma$ and it is denoted by $\Sigma R(\Sigma).$

 Let $\Sigma$
be a set of identities of type $\tau.$ For $t\approx s\in Id(\tau)$
we say $\Sigma\vdash_{\Sigma R} t\approx s $ ($``\Sigma$ $\Sigma R
$-proves $t\approx s"$) if there is a sequence of identities
$t_1\approx s_1,\ldots,t_n\approx s_n$, such that each identity
belongs to $\Sigma$ or is a result of applying any of the derivation
rules
   $D_1, D_2, D_3,  D_4$ or $\Sigma R_1$
  to previous identities
in the sequence and the last identity $t_n\approx s_n$ is
$t\approx s.$

Let $t\approx s$ be an identity and $\mathcal{A}$ be an algebra of
type $\tau$. $\mathcal{A}\models_{\Sigma R} t\approx s$ means that
$\mathcal{A}\models t^\Sigma(r\leftarrow v) \approx
s^\Sigma(r\leftarrow v)$ for every $r\in SEss(t,\Sigma)\cap
SEss(s,\Sigma)$ and $v\in W_\tau(X)$.

Let $\Sigma$ be a set of identities. For $t,s\in W_\tau(X)$ we say
$\Sigma\models_{\Sigma R} t\approx s$ (read: ``$\Sigma$ $\Sigma
R-$yields $t\approx s$") if, given any algebra $\mathcal{A}$,
$$\mathcal{A}\models_{\Sigma R} \Sigma\quad\Rightarrow\quad \mathcal{A}\models_{\Sigma R} t\approx s.$$

\begin{theorem}\label{p-TCclosure}
$\Sigma R$ is a  closure operator in  the set $Id(\tau)$, i.e.,
\begin{enumerate}

\item[(i)] $\Sigma\subseteq \Sigma R(\Sigma)$;

\item[(ii)] $\Sigma_1 \subseteq \Sigma_2 \Rightarrow
\Sigma_1 R(\Sigma_1)\subseteq \Sigma_2 R(\Sigma_2)$;

\item[(iii)] $\Sigma R(\Sigma R(\Sigma))=\Sigma R(\Sigma)$.
\end{enumerate}
\end{theorem}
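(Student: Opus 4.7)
The plan is to verify the three closure-operator axioms directly from the definition of $\Sigma R(\Sigma)$ as the smallest $\Sigma R$-deductively closed set containing $\Sigma$. Property (i) is immediate from that definition. For idempotence (iii), the inclusion $\Sigma R(\Sigma)\subseteq\Sigma R(\Sigma R(\Sigma))$ is just (i) applied with $\Sigma R(\Sigma)$ in the role of $\Sigma$. For the reverse inclusion, I would observe that $\Sigma R(\Sigma)$ is, by its very construction, $\Sigma R$-deductively closed and trivially contains itself; hence it qualifies as a $\Sigma R$-deductively closed set containing $\Sigma R(\Sigma)$, and minimality of $\Sigma R(\Sigma R(\Sigma))$ forces $\Sigma R(\Sigma R(\Sigma))\subseteq\Sigma R(\Sigma)$.

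For monotonicity (ii), my strategy is induction on the length of a $\Sigma_1 R$-derivation producing an identity of $\Sigma_1 R(\Sigma_1)$ from $\Sigma_1$. The base case is immediate from the chain $\Sigma_1\subseteq\Sigma_2\subseteq\Sigma_2 R(\Sigma_2)$. For the inductive step, the rules $D_1,D_2,D_3,D_4$ have hypotheses and conclusions that make no reference to the ambient set, so the fact that $\Sigma_2 R(\Sigma_2)$ is closed under them gives the inductive step for free in those cases.

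The main obstacle, as I see it, is the rule $\Sigma R_1$. Its premises require $r\in SEss(t,\Sigma_1)$ and $v\in SEss(s,\Sigma_1)$, and its conclusion is the identity $t^{\Sigma_1}(r\leftarrow u)\approx s^{\Sigma_1}(v\leftarrow w)$, in which both $SEss$ and the composition $t^{\Sigma_1}$ depend intrinsically on $\Sigma_1$. By Proposition \ref{p-SigmaEss}, $SEss(\cdot,\Sigma)$ is antimonotone in $\Sigma$; moreover, the set of minimal positions $P^t_r$ may differ markedly between $\Sigma_1$ and $\Sigma_2$, so the terms $t^{\Sigma_1}(r\leftarrow u)$ and $t^{\Sigma_2}(r\leftarrow u)$ need not coincide syntactically. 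My plan to cope with this is to derive the target identity inside $\Sigma_2 R(\Sigma_2)$ by performing the required positional replacements one minimal position of $P^t_r$ at a time, exploiting instances of $\Sigma_2 R_1$ together with $D_1,\ldots,D_4$ and Proposition \ref{p-Sigma}(i)--(iii) to reconcile the $\Sigma_1$- and $\Sigma_2$-compositions modulo $\Sigma_2$-deduction, and then gluing the steps together via transitivity $D_3$. The delicate issue, which is the crux of the proof, is to show that replacement at each position prescribed by $P^t_r$ in the $\Sigma_1$-sense can actually be performed using only the rules available inside $\Sigma_2 R(\Sigma_2)$, despite the antimonotonicity of $SEss$.
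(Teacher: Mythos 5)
The paper states this theorem without any proof, so there is nothing of the author's to compare your argument against; it has to stand on its own. Parts (i) and (iii) are fine: they are the standard facts about a closure defined as the smallest closed superset of the generators (with the caveat, which is the paper's rather than yours, that one must fix a reading of Definition \ref{d-TSR} under which being $\Sigma R$-deductively closed is an absolute property of the set — so that $\Sigma R(\Sigma)$ is an admissible competitor in the minimality argument for (iii) — and under which the smallest such set exists at all, which is not automatic for a rule whose hypotheses and conclusion both depend on the ambient set).

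Part (ii) is where all the content of the theorem sits, and there you have correctly diagnosed the obstacle but not removed it: your final sentence concedes that ``the crux of the proof'' is still to be shown, so what you have is a statement of the difficulty, not an argument. Concretely, two things are missing. First, if $r\in SEss(t,\Sigma_1)$ but $r\notin SEss(t,\Sigma_2)$ — which the antimonotonicity you invoke makes perfectly possible — then the rule $\Sigma R_1$ relative to $\Sigma_2$ simply does not fire, so the target identity cannot be obtained by ``instances of $\Sigma_2 R_1$'' at all; one must instead argue that replacement at $\Sigma_2$-fictive positions does not change the $\Sigma_2$-class (Theorem \ref{t-EssPos1}) and then invoke $D_5$ inside $\Sigma_2 R(\Sigma_2)$, which is only known to be available there via Theorem \ref{t-TCclosure}. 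Second, even when $r$ stays $\Sigma_2$-essential, the identity you must produce involves $t(P^t_r;u)$ with $P^t_r$ computed with respect to $\Sigma_1$, whereas $\Sigma_2 R_1$ yields the replacement at the minimal positions computed with respect to $\Sigma_2$; since $\Sigma_1$-equality of a subterm to $r$ implies $\Sigma_2$-equality but not conversely, these position sets can genuinely differ (minimal positions can migrate upward), and you need a lemma in the spirit of Proposition \ref{p-Sigma}(i) — but comparing compositions taken with respect to two \emph{different} sets of identities — showing that $\Sigma_2 R(\Sigma_2)$ identifies the two resulting terms. Note also that $\Sigma R_1$ only performs the simultaneous replacement at all minimal positions, so ``one minimal position at a time'' is not an available move until you have justified it through $D_5$. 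Until these steps are supplied, part (ii) remains unproved.
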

The following lemma is clear.
\begin{lemma}\label{l-4.1.}
 For each set $\Sigma\subseteq Id(\tau)$ and for each identity $t\approx s\in Id(\tau)$
 we have
\\
\hspace*{.05cm}  $\Sigma\vdash_{\Sigma R} t\approx s \iff \Sigma
R(\Sigma)\vdash t\approx s.$
\end{lemma}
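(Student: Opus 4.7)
The plan is to prove both directions by reducing to membership in $\Sigma R(\Sigma)$. A preliminary observation, obtained by induction on the length of a $\Sigma R$-deduction, is that $\Sigma \vdash_{\Sigma R} \alpha \approx \beta$ if and only if $\alpha \approx \beta \in \Sigma R(\Sigma)$: the ``only if'' half holds because $\Sigma R(\Sigma)$ contains $\Sigma$ and is closed under $D_1, D_2, D_3, D_4, \Sigma R_1$, so every step of a $\Sigma R$-deduction from $\Sigma$ lands in $\Sigma R(\Sigma)$; the ``if'' half holds because $\Sigma R(\Sigma)$ is the \emph{smallest} such set, so each of its members can be obtained from $\Sigma$ by finitely many rule applications, which is itself a $\Sigma R$-deduction.

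Given this observation, the forward implication is immediate: if $\Sigma \vdash_{\Sigma R} t \approx s$ then $t \approx s \in \Sigma R(\Sigma)$, so the single-step $D$-deduction taking $t \approx s$ as its sole hypothesis from $\Sigma R(\Sigma)$ witnesses $\Sigma R(\Sigma) \vdash t \approx s$.

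For the backward direction, I would show that $\Sigma R(\Sigma)$ is itself $D$-deductively closed, so that any $D$-deduction from $\Sigma R(\Sigma)$ stays within $\Sigma R(\Sigma)$. Closure under $D_1, D_2, D_3, D_4$ is built into the definition of $\Sigma R$-closedness, so the only nontrivial step is closure under $D_5$. Given $t' \approx s' \in \Sigma R(\Sigma)$ and $r$ with $sub_r(p) = t'$, I would apply $\Sigma R_1$ to the three identities $r \approx r$, $t' \approx t'$, and $t' \approx s'$ (all in $\Sigma R(\Sigma)$ by $D_1$ and hypothesis). When $t' \in SEss(r, \Sigma R(\Sigma))$, this yields $r^{\Sigma R(\Sigma)}(t' \leftarrow t') \approx r^{\Sigma R(\Sigma)}(t' \leftarrow s') \in \Sigma R(\Sigma)$, which I can then rewrite as $r \approx r(p; s')$ using Proposition~\ref{p-Sigma}(i) together with $D_2, D_3$. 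The fictive subcase $t' \notin SEss(r, \Sigma R(\Sigma))$ is disposed of directly by Theorem~\ref{t-EssPos1}, which already forces $r \approx r(p; s')$.

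The main obstacle I expect is reconciling the single-position composition $r(p; s')$ that appears in $D_5$ with the simultaneous-minimal-position composition $r^{\Sigma R(\Sigma)}(t' \leftarrow s')$ produced by $\Sigma R_1$; these differ when $r$ contains several $\Sigma R(\Sigma)$-equal copies of $t'$. I expect the extra replacements to be absorbed by iterated applications of $\Sigma R_1$ and transitivity $D_3$, using that each affected subterm is $\Sigma R(\Sigma)$-equivalent to both $t'$ and $s'$.
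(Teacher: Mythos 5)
Your overall strategy is sound, and the forward direction is fine: a $\Sigma R$-deduction from $\Sigma$ lands in $\Sigma R(\Sigma)$, and membership there gives a one-line $D$-deduction. (The paper itself offers no argument, declaring the lemma clear; the substance you are supplying for the converse is really the first half of Theorem~\ref{t-TCclosure}, which the paper proves separately and later.) The backward direction, however, has a genuine gap. Reducing it to $D_5$-closure of $\Sigma R(\Sigma)$ is correct, but your derivation of $D_5$-closure is circular. The instance of $\Sigma R_1$ you propose outputs $r^{\Sigma}(t'\leftarrow t')\approx r^{\Sigma}(t'\leftarrow s')$, a simultaneous replacement at all minimal positions of subterms $\Sigma$-equal to $t'$; to turn this into the single-position identity $r\approx r(p;s')$ you invoke Proposition~\ref{p-Sigma}(i) and, in the fictive case, Theorem~\ref{t-EssPos1}. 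Both of these are semantic statements of the form $\Sigma\models\alpha\approx\beta$, i.e., membership in $D(\Sigma)$; to use them as steps of your deduction you need $D(\Sigma R(\Sigma))\subseteq\Sigma R(\Sigma)$, which is exactly the $D$-closure you are in the middle of establishing. Your closing suggestion that the position mismatch can be ``absorbed by iterated applications of $\Sigma R_1$ and transitivity'' does not escape this: each bridging identity $r\approx r(q;t')$ for a single position $q$ whose subterm is $\Sigma$-equal to $t'$ is itself an instance of $D_5$, and every further application of $\Sigma R_1$ again replaces at all minimal positions simultaneously, so the iteration never manufactures the one-position replacement you need.

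The way out is the fresh-variable device used in the paper's proof of Theorem~\ref{t-TCclosure}: set $u:=r(p;x_{n+1})$ with $x_{n+1}$ not occurring in $r$, $t'$ or $s'$, and apply $\Sigma R_1$ to $u\approx u$, $x_{n+1}\approx x_{n+1}$ and $t'\approx s'$. Because $x_{n+1}$ occurs at exactly one position of $u$, one has $u^{\Sigma}(x_{n+1}\leftarrow v)=u(x_{n+1}\leftarrow v)=u(p;v)$, so the rule directly yields $r(p;t')\approx r(p;s')$, that is $r\approx r(p;s')$, with no bridging identities required; the case $p\in PFic(r,\Sigma)$ is then handled via Proposition~\ref{p-EssDeriv}. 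With that substitution for your direct application of $\Sigma R_1$, the rest of your argument closes up.
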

\begin{theorem}\label{t-TCcompleteness}
(The Completeness Theorem for $\Sigma R$-Equational Logic) Let
$\Sigma\subseteq Id(\tau)$ be a set of identities and $t\approx s\in
Id(\tau)$. Then
$$\Sigma\models_{\Sigma R} t\approx s \iff
\Sigma\vdash_{\Sigma R} t\approx s.$$
\end{theorem}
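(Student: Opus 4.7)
The plan is to split the equivalence into the standard soundness ($\Sigma\vdash_{\Sigma R}t\approx s\Rightarrow \Sigma\models_{\Sigma R}t\approx s$) and completeness ($\Sigma\models_{\Sigma R}t\approx s\Rightarrow \Sigma\vdash_{\Sigma R}t\approx s$) directions. Soundness I would prove by induction on the length of a $\Sigma R$-deduction; completeness I would reduce, via Lemma~\ref{l-4.1.}, to the classical Birkhoff completeness theorem \cite{bur} applied to the set $\Sigma R(\Sigma)$.

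For soundness, fix an algebra $\mathcal{A}$ with $\mathcal{A}\models_{\Sigma R}\Sigma$. The base case (identities lying in $\Sigma$) and the rules $D_1$, $D_2$, $D_3$ pass through the definition of $\models_{\Sigma R}$ at once, because ordinary equality in $\mathcal{A}$ is reflexive, symmetric, and transitive after any fixed uniform $\Sigma$-composition $(\cdot)^{\Sigma}(r\leftarrow v)$. For rule $D_4$ one checks that variable substitution commutes with these $\Sigma$-compositions up to $\Sigma$-equivalence, which is a direct unfolding of Definition~\ref{d-SigmaComp} together with Proposition~\ref{p-Sigma}. The genuinely new step is rule $\Sigma R_1$: assuming $t\approx s$, $r\approx v$, $u\approx w$ are in $\Sigma$ with $r\in SEss(t,\Sigma)$ and $v\in SEss(s,\Sigma)$, I would use Proposition~\ref{p-Sigma}(ii)--(iii) (to identify $P^t_r$ with $P^t_v$ and to swap $r$ for $v$ inside a $\Sigma$-composition) and Proposition~\ref{p-PFic} (to control the fictive positions produced by the replacement) to establish $\mathcal{A}\models_{\Sigma R}t^{\Sigma}(r\leftarrow u)\approx s^{\Sigma}(v\leftarrow w)$.

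For completeness, Lemma~\ref{l-4.1.} reduces the claim to $\Sigma\models_{\Sigma R}t\approx s\Rightarrow \Sigma R(\Sigma)\vdash t\approx s$. I would first verify that $\Sigma R(\Sigma)$, being closed under $D_1$--$D_4$ and $\Sigma R_1$, is actually $D$-deductively closed (hence a fully invariant congruence in the sense of \cite{bur}), either directly or by exhibiting $D_5$-closure as a consequence of $\Sigma R_1$ instantiated with trivial side identities from $D_1$. Then classical Birkhoff yields $\Sigma R(\Sigma)\vdash t\approx s \iff \Sigma R(\Sigma)\models t\approx s$. The remaining bridge $\Sigma\models_{\Sigma R}t\approx s\iff \Sigma R(\Sigma)\models t\approx s$ splits into two steps: for $(\Leftarrow)$, every classical model of $\Sigma R(\Sigma)$ is a $\models_{\Sigma R}$-model of $\Sigma$, using Proposition~\ref{p-Sigma}(i) to collapse self-replacements $t^{\Sigma}(r\leftarrow v)$ back to $t$ up to $\Sigma$-equivalence; for $(\Rightarrow)$, the free algebra $W_{\tau}(X)/\Sigma R(\Sigma)$ classically models $\Sigma R(\Sigma)$ and separates $t$ from $s$ whenever $\Sigma R(\Sigma)\not\vdash t\approx s$, yielding a witness that refutes $\Sigma\models_{\Sigma R}t\approx s$.

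I expect the main obstacle to be the soundness of $\Sigma R_1$ combined with the verification that $\Sigma R(\Sigma)$ is $D_5$-closed. Simultaneously replacing $\Sigma$-essential subterms on both sides of an identity by terms that are themselves linked by an identity of $\Sigma$ can create new $\Sigma$-essential or $\Sigma$-fictive positions in the resulting terms, and the inductive hypothesis for $\models_{\Sigma R}$ must be re-applied in this modified landscape. Careful bookkeeping via Remark~\ref{rem-assoc} and the corollaries following Theorem~\ref{t-EssPos1} should suffice, but the verification is the delicate heart of the argument.
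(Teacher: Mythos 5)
Your proposal is correct in outline but takes a genuinely different route from the paper. The paper's proof is almost entirely syntactic: soundness is dispatched in one line by observing that a $\Sigma R$-deduction uses only rules under which $\Sigma R(\Sigma)$ is closed, and the converse is handled by checking that the relation $\Sigma\vdash_{\Sigma R}$ is itself closed under $D_1$--$D_3$ and $\Sigma R_1$ by concatenating deductions --- no algebra is ever constructed, and in effect $\models_{\Sigma R}$ is tacitly identified with membership in $\Sigma R(\Sigma)$. You instead route completeness through classical Birkhoff completeness applied to $\Sigma R(\Sigma)$: you first verify that $\Sigma R(\Sigma)$ is $D$-deductively closed (this is exactly the first half of the paper's Theorem~\ref{t-TCclosure}, whose proof does not depend on the completeness theorem, so there is no circularity), and you then prove the semantic bridge $\Sigma\models_{\Sigma R}t\approx s\iff\Sigma R(\Sigma)\models t\approx s$ directly via a soundness induction and the free algebra $W_\tau(X)/\Sigma R(\Sigma)$. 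That bridge is the paper's Lemma~\ref{l-4.2.}, which the paper only derives \emph{from} the completeness theorem, so your independent proof of it is necessary and legitimate. What your route buys is an explicit model-existence step and an honest treatment of the semantic side of the statement, at the cost of having to face squarely the soundness of $\Sigma R_1$ (where Proposition~\ref{p-Sigma}(ii)--(iii) is indeed what reconciles the asymmetric premises $r\in SEss(t,\Sigma)$, $v\in SEss(s,\Sigma)$ with the definition of $\models_{\Sigma R}$, which quantifies over a single common subterm). One small slip in your bookkeeping: the two halves of the bridge are swapped --- the claim that every classical model of $\Sigma R(\Sigma)$ is a $\models_{\Sigma R}$-model of $\Sigma$ is the ingredient needed for the free-algebra $(\Rightarrow)$ half, while the $(\Leftarrow)$ half needs the soundness statement that every $\models_{\Sigma R}$-model of $\Sigma$ classically satisfies all of $\Sigma R(\Sigma)$; both ingredients appear in your outline, merely attached to the wrong directions.
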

\begin{proof} \   The implication $\Sigma\vdash_{\Sigma R} t\approx
s\Rightarrow \Sigma\models_{\Sigma R} t\approx s$ follows by
$\Sigma\vdash_{\Sigma R} t\approx s\Rightarrow t\approx s\in \Sigma
R(\Sigma)$ since we have used only properties under which $\Sigma
R(\Sigma)$ is closed, i.e.,  under $D_1, D_2, D_3, D_4$ and $\Sigma
R_1.$

For the converse of this, let us note that  for $t\in W_\tau(X)$ we
have $\Sigma\vdash_{\Sigma R} t\approx t$ and if $ t\approx
s\in\Sigma$ then $\Sigma\vdash_{\Sigma R} t\approx s$.

If $\Sigma\vdash_{\Sigma R} t\approx s$, then there is a formal
$\Sigma R-$deduction $t_1\approx s_1,\ldots,t_n\approx
s_n\quad\mbox{of}\quad t\approx s.$ But then $t_1\approx
s_1,\ldots,t_n\approx s_n,\ \  s_n\approx t_n$ is a $\Sigma
R-$deduction of $s\approx t.$

If $\Sigma\vdash_{\Sigma R} t\approx s$ and $\Sigma\vdash_{\Sigma R}
s\approx r$ let $t_1\approx s_1,\ldots,t_n\approx s_n$ be a $\Sigma
R-$deduction of $t\approx s$ and let $\overline s_1\approx
r_1,\ldots,\overline s_k\approx r_k$ be a $\Sigma R-$deduction of
$s\approx r.$ Then
$$t_1\approx s_1,\ldots,t_n\approx s_n,\ \
 \overline s_1\approx r_1,\ldots,\overline s_k\approx r_k,\ \ t_n\approx r_k$$
is a $\Sigma R-$deduction of $t\approx r.$ Hence
$\Sigma\vdash_{\Sigma R} t\approx r.$

Let $\Sigma\models_{\Sigma R} t\approx s$, \ $\Sigma\models_{\Sigma
R} r\approx v$ , $\Sigma\models_{\Sigma R} u\approx w$ and
$\Sigma\models_{\Sigma R} t^{\Sigma}(r\leftarrow u)\approx
s^{\Sigma}(v\leftarrow w)$. Suppose that $\Sigma\vdash_{\Sigma R}
t\approx s$, $\Sigma\vdash_{\Sigma R} r\approx v$ and
$\Sigma\vdash_{\Sigma R} u\approx w$.
 Let $t_1\approx
s_1,\ldots,t_n\approx s_n$, $r_1\approx v_1,\ldots,r_m\approx v_m$
and $u_1\approx w_1,\ldots,u_k\approx w_k$ be $\Sigma R-$deductions
of
$t\approx s$, $r\approx v$ and $u\approx w$. Then 
\begin{gather*}
t_1\approx s_1,\ldots,t_n\approx s_n,\quad
  r_1\approx v_1,\ldots,r_m\approx v_m, \\
u_1\approx w_1,\ldots,u_k\approx w_k,\quad 
t_n^{\Sigma}(r_m\leftarrow u_k )\approx s_n^{\Sigma}(v_m\leftarrow
w_k)
\end{gather*}
 is a $\Sigma R-$deduction of $t^{\Sigma}(r\leftarrow u)\approx s^{\Sigma}(v\leftarrow w)$. Hence
$\Sigma\vdash_{\Sigma R} t^{\Sigma}(r\leftarrow u)\approx
s^{\Sigma}(v\leftarrow w)$.
  \end{proof}
\begin{theorem}
\label{t-TCclosure} For each set of identities $\Sigma$ the closure
 $  \Sigma R(\Sigma)$ is a fully invariant congruence,
 but  $\Sigma R(\Sigma)$ is not in  general equal to $D(\Sigma)$.
 \end{theorem}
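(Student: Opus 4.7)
The theorem has two parts: (A) $\Sigma R(\Sigma)$ is a fully invariant congruence on $W_\tau(X)$; (B) in general $\Sigma R(\Sigma)\neq D(\Sigma)$.

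For (A), reflexivity, symmetry, and transitivity of $\Sigma R(\Sigma)$ as a binary relation on $W_\tau(X)$ are immediate from its closure under $D_1,D_2,D_3$, and full invariance under endomorphisms of $W_\tau(X)$ follows by iterating $D_4$ once per variable of the identity in question. The nontrivial step is the congruence property: from $t_i\approx s_i\in \Sigma R(\Sigma)$ for $i=1,\ldots,n$ we must derive $f(t_1,\ldots,t_n)\approx f(s_1,\ldots,s_n)\in \Sigma R(\Sigma)$. By transitivity ($D_3$) it suffices to swap one coordinate at a time, reducing to $f(t_1,\ldots,t_{i-1},t_i,t_{i+1},\ldots,t_n)\approx f(t_1,\ldots,t_{i-1},s_i,t_{i+1},\ldots,t_n)$. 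If the $i$-th position of $f(x_1,\ldots,x_n)$ is $\Sigma$-fictive, Theorem \ref{t-EssPos1} already delivers $\Sigma\models f(\ldots,t_i,\ldots)\approx f(\ldots,s_i,\ldots)$, which then lies in $\Sigma R(\Sigma)$ by the Completeness Theorem \ref{t-TCcompleteness}. If the $i$-th position is $\Sigma$-essential, I apply $\Sigma R_1$ to the three identities $f(t_1,\ldots,t_n)\approx f(t_1,\ldots,t_n)$ and $t_i\approx t_i$ (both via $D_1$) and the hypothesis $t_i\approx s_i$, and then invoke Proposition \ref{p-Sigma}(i) to rewrite the left-hand side of the resulting conclusion back to $f(t_1,\ldots,t_n)$. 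The technical obstacle here is that the $\Sigma$-composition $t^{\Sigma}(r\leftarrow u)$ is a simultaneous replacement at \emph{all} minimal $\Sigma$-equivalent positions, so when further subterms of $f(t_1,\ldots,t_n)$ happen to be $\Sigma$-equivalent to $t_i$ the argument must be iterated at those positions, each time stripping off the extra replacement via Proposition \ref{p-Sigma}(i).

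For (B), the conceptual reason for strict inclusion is that the non-standard satisfaction relation $\models_{\Sigma R}$ introduced before Theorem \ref{t-TCcompleteness} is strictly stronger than ordinary satisfaction: requiring $\mathcal{A}\models t^{\Sigma}(r\leftarrow v)\approx s^{\Sigma}(r\leftarrow v)$ for every $r\in SEss(t,\Sigma)\cap SEss(s,\Sigma)$ and every $v\in W_\tau(X)$ in general excludes some algebras in $Mod(\Sigma)$. I would draw the witness from a $\Sigma$ that identifies many syntactically distinct subterms---such as the rectangular-band identities of Example \ref{e-EssPoss1}, under which, for instance, $f(f(x_1,x_2),x_2)$ and $f(x_1,x_2)$ are $\Sigma$-equal---and exhibit a one-step $\Sigma R_1$-derivation together with an explicit algebra $\mathcal{A}\in Mod(\Sigma)$ refuting the resulting identity. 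By Birkhoff's characterisation $D(\Sigma)=Id(Mod(\Sigma))$, the refuting model certifies that this identity lies in $\Sigma R(\Sigma)\setminus D(\Sigma)$. The principal obstacle in (B) is precisely the construction of such a witness: small candidate examples tend to collapse back into $D(\Sigma)$ because both sides of the derived identity simplify under $\Sigma$ to a common term, so the parameters $t,s,r,v,u,w$ of the $\Sigma R_1$-application must be chosen so that the multi-position substitution of $\Sigma$-equivalents genuinely exits the model class of $\Sigma$.
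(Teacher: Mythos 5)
Your overall decomposition (first full invariance, then a witness for $D(\Sigma)\subsetneq\Sigma R(\Sigma)$) matches the paper's, and the treatment of fictive positions via Theorem \ref{t-EssPos1} is fine, but the mechanism you propose for the essential case does not close. Applying $\Sigma R_1$ with $t=s=f(t_1,\ldots,t_n)$, $r=v=t_i$ and $u\approx w$ the hypothesis $t_i\approx s_i$ yields $f(t_1,\ldots,t_n)^\Sigma(t_i\leftarrow t_i)\approx f(t_1,\ldots,t_n)^\Sigma(t_i\leftarrow s_i)$, whose right-hand side substitutes $s_i$ at \emph{every} minimal position carrying a subterm $\Sigma$-equal to $t_i$, not only at the $i$-th coordinate. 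Proposition \ref{p-Sigma}(i) lets you rewrite the left-hand side back to $f(t_1,\ldots,t_n)$, but it cannot ``strip off'' the unwanted replacements on the right: a coordinate $t_j$ that was $\Sigma$-equal to $t_i$ has become $s_i$, and undoing that is a single-position replacement of $s_i$ by $t_j$ --- exactly the rule $D_5$ you are in the middle of deriving, so the iteration is circular. The paper's proof avoids this with a fresh-variable device that is the missing idea here: for $p\in PEss(r,\Sigma)$ form $u=r(p;x_{n+1})$ with $x_{n+1}$ not occurring in $r,t,s$; then $u^\Sigma(x_{n+1}\leftarrow v)=u(x_{n+1}\leftarrow v)=u(p;v)$ \emph{exactly}, since the fresh variable sits at the single position $p$ and no other subterm is $\Sigma$-equal to it, and one application of $\Sigma R_1$ to $u\approx u$, $x_{n+1}\approx x_{n+1}$, $t\approx s$ delivers precisely $r(p;t)\approx r(p;s)$.

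For the second half you produce no witness, and the one source you name is a dead end: Example \ref{ex4} of the paper shows that the identities of rectangular bands form a globally invariant ($\Sigma R$-closed) congruence, so every $\Sigma R_1$-step from $Id(RB)$ stays inside $Id(RB)=D(\Sigma)$ and no refuting algebra in $Mod(\Sigma)$ can exist there. The paper's actual witness is bare associativity, $\Sigma=\{f(x_1,f(x_2,x_3))\approx f(f(x_1,x_2),x_3)\}$, with $t=f(f(f(x_1,x_2),x_1),x_2)$, $s=f(f(x_1,x_2),f(x_1,x_2))$, $r=v=f(x_1,x_2)$, $u=w=x_1$: here $P^t_r=\{11\}$ while $P^s_v=\{1,2\}$, so $\Sigma R_1$ produces $f(f(x_1,x_1),x_2)\approx f(x_1,x_1)$, i.e.\ $x_1x_1x_2\approx x_1x_1$, which fails in the free semigroup. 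The feature that makes this work is precisely what your rectangular-band suggestion cannot supply: a $\Sigma$-valid identity whose two sides contain \emph{different numbers} of minimal positions with subterms $\Sigma$-equal to $r$ (a non-$\Sigma$-balanced identity, in the language of Section \ref{sec4}), so that the simultaneous replacement acts asymmetrically and exits $Mod(\Sigma)$.
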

\begin{proof}\ \ Let $\Sigma$ be a $\Sigma R-$deductively closed set
of identities. We will prove that $\Sigma$ is a fully invariant
congruence. It has to be shown that $\Sigma$ satisfies the rule
 $D_5$, i.e., if $r\in W_\tau(X)$, $t\approx s
\in\Sigma$ and $p\in Pos(r)$, then $r(p;t)\approx r(p;s)\in\Sigma.$

If $p\notin PEss(r,\Sigma)$, then according to Proposition
\ref{p-EssDeriv} we have $\Sigma\models r(p;v)\approx r(p;w)$ for
all terms $v,w\in W_\tau(X)$.

Let $p\in PEss(r,\Sigma)$ and let $n$ be a natural number such that
$r,t,s\in W_\tau(X_n)$ and let us consider the term
$u=r(p;x_{n+1})$. Clearly, $u\approx u\in\Sigma$, because of $D_1.$
We have
$$u^\Sigma(x_{n+1}\leftarrow v)=u(x_{n+1}\leftarrow v)=u(p;v)$$
for each $v\in W_\tau(X).$
 Now from $\Sigma R_1$ we obtain
$$u^\Sigma(x_{n+1}\leftarrow t)\approx u^\Sigma(x_{n+1}\leftarrow
s)\in\Sigma,\mbox{   i.e.,   } u(x_{n+1}\leftarrow t)\approx
u(x_{n+1}\leftarrow s)\in\Sigma,$$ and
 $r(p;t)\approx r(p;s)\in\Sigma$.

Furthermore,  we will produce a fully invariant congruence $\Sigma$,
which is not $\Sigma R-$ deductively closed. Let us consider the
variety $SG=Mod(\Sigma)$ of semigroups, where $\Sigma=
\{f(x_1,f(x_2,x_3)) \approx f(f(x_1,x_2),x_3) \}$.

From  Theorem 14.17  of \cite{bur} it follows that if $\Sigma$ is a
fully invariant congruence, then  $D(\Sigma)=Id(Mod(\Sigma)).$ Hence
$Id(SG)=D(\Sigma).$
\\ Let $$t=f(f(f(x_1,x_2),x_1),x_2)\quad\mbox{ and }\quad
  s=f(f(x_1,x_2),f(x_1,x_2)).$$
It is not difficult to see that   $\Sigma\models t\approx s$, i.e.,
$t\approx s\in D(\Sigma)$.
 Let us set
$r=v=f(x_1,x_2)$  and $u=w=x_1$. Clearly, for each $z\in W_\tau(X)$
we have $PEss(z,\Sigma)=Pos(z)$ and $r\in SEss(t,\Sigma)$, and $v\in
SEss(s,\Sigma)$. Since $P_r^t=\{11\}$ and $P_s^v=\{1,2\}$,
  we obtain
$$t^\Sigma(r\leftarrow u)=f(f(x_1,,x_1),x_2) \quad\mbox{ and }\quad s^\Sigma(v\leftarrow
w)=f(x_1,x_1).$$
 Hence
$$ \Sigma\not\models t^\Sigma(r\leftarrow u)\approx s^\Sigma(v\leftarrow w).$$
On the other side we have $t^\Sigma(r\leftarrow u)\approx
s^\Sigma(v\leftarrow w)\in \Sigma R(\Sigma)$. Consequently,
$D(\Sigma)$ is a proper subset (equational theory) of $\Sigma
R(\Sigma)$ and $Mod(\Sigma R(\Sigma))$ is a proper subvariety of
$SG$, which contains the variety $RB$ of rectangular bands as a
subvariety, according to Example \ref{ex4}, below.
 \end{proof}

\begin{lemma}\label{l-4.2.}
 For each set $\Sigma\subseteq Id(\tau)$ and for each identity $t\approx s\in Id(\tau)$
 we have
\\
$\Sigma\models_{\Sigma R} t\approx s \iff \Sigma R(\Sigma)\models
t\approx s.$
\end{lemma}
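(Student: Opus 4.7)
The plan is to chain together the three completeness/closure results already available: Theorem~\ref{t-TCcompleteness} (completeness of $\Sigma R$-deduction with respect to $\Sigma R$-semantics), Lemma~\ref{l-4.1.} (equating $\Sigma\vdash_{\Sigma R}$ with $\Sigma R(\Sigma)\vdash$), Theorem~\ref{t-TCclosure} (which guarantees that $\Sigma R(\Sigma)$ is a fully invariant congruence), and the classical Birkhoff completeness theorem cited from \cite{bur}, which states that for every set $\Gamma\subseteq Id(\tau)$, $\Gamma\models t\approx s$ iff $t\approx s\in D(\Gamma)$ iff $\Gamma\vdash t\approx s$.

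First I would argue the direction $\Sigma R(\Sigma)\models t\approx s\Rightarrow \Sigma\models_{\Sigma R} t\approx s$. Apply classical completeness to the set $\Gamma:=\Sigma R(\Sigma)$ to get $t\approx s\in D(\Sigma R(\Sigma))$. By Theorem~\ref{t-TCclosure}, $\Sigma R(\Sigma)$ is already a fully invariant congruence, hence (by Theorem 14.17 of \cite{bur}, which the authors have invoked right above Proposition~\ref{p-EssDeriv}) it is $D$-deductively closed, so $D(\Sigma R(\Sigma))=\Sigma R(\Sigma)$. Therefore $t\approx s\in\Sigma R(\Sigma)$, i.e.\ $\Sigma R(\Sigma)\vdash t\approx s$. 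Lemma~\ref{l-4.1.} turns this into $\Sigma\vdash_{\Sigma R} t\approx s$, and Theorem~\ref{t-TCcompleteness} then gives $\Sigma\models_{\Sigma R} t\approx s$.

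The converse is the same chain read in reverse. Starting from $\Sigma\models_{\Sigma R} t\approx s$, Theorem~\ref{t-TCcompleteness} yields $\Sigma\vdash_{\Sigma R} t\approx s$; Lemma~\ref{l-4.1.} converts this into $\Sigma R(\Sigma)\vdash t\approx s$; and classical completeness (applied again to $\Gamma=\Sigma R(\Sigma)$) delivers $\Sigma R(\Sigma)\models t\approx s$.

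I do not foresee a serious obstacle: once Theorems~\ref{t-TCcompleteness} and~\ref{t-TCclosure} and Lemma~\ref{l-4.1.} are in place, this lemma is essentially bookkeeping. The only subtlety worth flagging is that classical $D$-completeness is applied to $\Sigma R(\Sigma)$ rather than to $\Sigma$, which is legitimate because the Birkhoff theorem holds for \emph{every} set of identities; it is precisely the fully-invariant-congruence property of $\Sigma R(\Sigma)$ that lets the $D$-closure collapse back onto $\Sigma R(\Sigma)$ and closes the loop.
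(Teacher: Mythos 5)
Your proposal is correct and follows essentially the same route as the paper, which disposes of the lemma in one line as an immediate consequence of Lemma~\ref{l-4.1.} and Theorem~\ref{t-TCcompleteness} (with the classical completeness theorem from \cite{bur} linking $\Sigma R(\Sigma)\vdash$ to $\Sigma R(\Sigma)\models$, just as you use it). Your extra step through $D(\Sigma R(\Sigma))=\Sigma R(\Sigma)$ via Theorem~\ref{t-TCclosure} is harmless bookkeeping that merely makes explicit what the paper leaves implicit.
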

The lemma follows immediately  from Lemma \ref{l-4.1.} and Theorem
\ref{t-TCcompleteness}.

  \begin{definition}\label{d-stableVar}
A set of identities $\Sigma$ is called a globally invariant
congruence if it is $\Sigma R-$deductively closed.

A variety $V$ of type $\tau$ is called stable if $Id(V)$ is $\Sigma
R-$deductively closed, i.e., $Id(V)$ is a globally
invariant congruence.
\end{definition}
Note that when $\Sigma$ is a globally invariant congruence it is
possible to apply substitutions or replacements in any place
(operation symbol) of terms which explains the word ``globally".
\begin{example}\label{ex4}\rm
Now, we will produce a fully invariant congruence $\Sigma$, which is
a globally invariant congruence. Let us consider the variety
$RB=Mod(\Sigma)$ of rectangular
 bands, where
$\Sigma$ is defined as in Example \ref{e-EssPoss1}.

 The set
$\Sigma$ consists of all equations $s \approx t$ such that the first
variable (leftmost) of $s$ agrees with the first variable of $t$,
i.e., $leftmost(t)=leftmost(s)$ and the last variable (rightmost) of
$s$ agrees with the last variable of $t$, i.e.,
$rightmost(t)=rightmost(s)$. It is well known that $\Sigma$ is a
fully invariant congruence and a totally invariant congruence ( see
\cite{den50,gra}). From  Theorem 14.17  of \cite{bur} it follows
that $Id(RB)=D(\Sigma).$

 Let
$t,s,r,v,u,w\in W_\tau(X)$ be six terms such that  $\Sigma\models
t\approx s,\ \Sigma\models r\approx v,\ \Sigma\models u\approx w$,
$r\in SEss(t,\Sigma)$  and $v\in SEss(s,\Sigma)$.

Thus we have 
\begin{alignat*}{2}
leftmost(t)&=leftmost(s), & leftmost(r)&=leftmost(v),\\
leftmost(u)&=leftmost(w),  & rightmost(t)&=rightmost(s),\\
rightmost(r)&=rightmost(v),\quad  &  rightmost(u)&=rightmost(w).
\end{alignat*}

 From  $r\in SEss(t,\Sigma)$ and $v\in SEss(s,\Sigma)$ (see Example
\ref{e-EssPoss1}), we obtain $$leftmost(t)=leftmost(r),\quad
leftmost(s)=leftmost(v)\ \mbox{
 or}$$
  $$rightmost(t)=rightmost(r),\quad 
   rightmost(s)=rightmost(v).$$
Hence
   $$leftmost(t^\Sigma(r\leftarrow u))=leftmost(s^\Sigma(v\leftarrow
 w))\ \mbox{   and   } $$ $$
   rightmost(t^\Sigma(r\leftarrow u))=rightmost(s^\Sigma(v\leftarrow w)).$$
  \end{example}

We are going to compare globally invariant  congruences with the
totally invariant congruences, defined by hypersubstitutions.

In \cite{den50,gra} the  solid varieties are  defined by adding a
new derivation rule which uses the concept of hypersubstitutions.

Let $\sigma:\mathcal{F}\to W_\tau(X)$ be a mapping which assigns to
every operation symbol $f\in\mathcal{F}_n$ an $n-$ary term. Such
mappings are called \emph{hypersubstitutions} (of type $\tau$). If
one replaces every operation symbol $f$ in a given term $t\in
W_\tau(X)$ by the term $\sigma(f)$, then the resulting term
$\hat\sigma[t]$ is the image of $t$ under  the extension
$\hat\sigma$ on the set $W_\tau(X)$. The monoid of all
hypersubstitutions is denoted by $Hyp(\tau).$

Let $\Sigma$ be a set of identities. The hypersubstitution
derivation rule is defined as follows:
\begin{enumerate}

\item[$H_1$] \emph{(hypersubstitution)} \\
$(t\approx s\in \Sigma\ \&\ \sigma\in Hyp(\tau))\ \Rightarrow\
\hat\sigma[t]\approx \hat\sigma[s]\in\Sigma.$
\end{enumerate}

 A set $\Sigma$ is called \emph{$\chi-$deductively closed (hyperequational theory, or totally
invariant congruence)} if it is
 closed with respect to the rules $D_1,D_2,D_3,D_4,D_5$ and $H_1$.
 The $\chi-$closure $\chi(\Sigma)$ of a set $\Sigma$ of identities is defined in a natural
 way and the meaning of $\Sigma\models_\chi$ and
 $\Sigma\vdash_\chi$ is clear.

It is obvious that $D(\Sigma)\subseteq \chi(\Sigma)$  for each set
of identities $\Sigma\subseteq Id(\tau)$.   There are examples of
 $\Sigma$ such that $D(\Sigma)\neq \chi(\Sigma)$, which shows
 that  the corresponding variety
 $Mod(\chi(\Sigma))$ is a proper subvariety of $Mod(D(\Sigma))$.
 A variety $V$ for which $Id(V)$ is $\chi-$deductively closed is
 called \emph{solid} variety of type $\tau$   \cite{den50}.

A more complex closure operator on sets of identities is studied in
\cite{dks1}. This operator  is based on the concept of coloured
terms and multi-hypersubstitutions.

The next proposition deals with the relations between the closure
operators $\Sigma R$ and $\chi$.
\begin{proposition}\label{p-StabSolid}
There exists a  stable variety, which is not a solid variety.
\end{proposition}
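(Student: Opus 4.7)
The plan is to exhibit the variety $V = LZ$ of left-zero semigroups of type $\tau=(2)$, defined by the single identity $f(x_1,x_2)\approx x_1$. Setting $\Sigma := Id(LZ)$, a standard analysis shows that $\Sigma = \{t\approx s \mid leftmost(t)=leftmost(s)\}$. The reason for choosing $LZ$ is that its defining law has a non-trivial right-hand side that will break under a projection hypersubstitution, while its $\Sigma$-essential structure is so restrictive that the $\Sigma R_1$ premise collapses.

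First I would verify that $LZ$ is stable, i.e.\ that $\Sigma$ is $\Sigma R$-deductively closed. Since $\Sigma$ is a fully invariant congruence, closure under $D_1$--$D_4$ is automatic, so it suffices to check $\Sigma R_1$. In any $LZ$-algebra $\mathcal{A}$ the term operation $t^{\mathcal{A}}(a_1,\ldots,a_n)$ equals the interpretation of the leftmost variable of $t$, so Corollary \ref{c-ess2} gives $Ess(t,\Sigma)=\{leftmost(t)\}$. Unwinding Definition \ref{d-EssPos1}, $p\in PEss(t,\Sigma)$ iff $p$ is a prefix of the leftmost-leaf position of $t$, so every $r\in SEss(t,\Sigma)$ lies on the leftmost branch of $t$ and therefore satisfies $leftmost(r)=leftmost(t)$, i.e.\ $\Sigma\models t\approx r$. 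The $\Sigma R_1$ hypothesis then collapses: given $t\approx s$, $r\approx v$, $u\approx w$ in $\Sigma$ with $r\in SEss(t,\Sigma)$ and $v\in SEss(s,\Sigma)$, clause (ii) of Definition \ref{d-SigmaComp} gives $t^{\Sigma}(r\leftarrow u)=u$ and $s^{\Sigma}(v\leftarrow w)=w$, while the desired conclusion $u\approx w\in\Sigma$ is the third hypothesis.

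Next I would verify that $LZ$ is not solid. Consider the hypersubstitution $\sigma\in Hyp(\tau)$ with $\sigma(f):=x_2$. Applying $\hat\sigma$ to the defining identity $f(x_1,x_2)\approx x_1\in\Sigma$ produces $x_2\approx x_1$, which is not in $\Sigma$ because the two sides have different leftmost variables. Hence $\Sigma$ is not closed under $H_1$, so $LZ$ is not a solid variety, completing the proof.

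The only nontrivial step is the characterization of $SEss(t,\Sigma)$ as the set of subterms of $t$ lying on its leftmost branch; once that is established, the stability verification is a single application of clause (ii) of Definition \ref{d-SigmaComp}, and the non-solidity is an immediate one-step computation. The reason the construction works is that the essential-variable structure of $LZ$ is so degenerate that the hypothesis $r\in SEss(t,\Sigma)$ forces $r\approx t$, collapsing $\Sigma R_1$ to transitivity, whereas a projection hypersubstitution swaps leftmost with rightmost and therefore breaks the one-sided defining identity.
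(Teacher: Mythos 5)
Your proof is correct, but it takes a genuinely different and considerably shorter route than the paper's. The paper works with $LA=Mod(\{f(f(x_1,x_2),x_1)\approx f(x_1,x_1)\})$ and verifies stability by a case-heavy induction on $Depth(t)$; your witness is the left-zero band variety, which the paper itself lists in Remark \ref{r2} (without proof) as a further stable non-solid variety. Your key observation --- that for $\Sigma=Id(L0)$ every term operation in a model depends only on the leftmost variable, hence $PEss(t,\Sigma)$ consists exactly of the positions on the leftmost branch, hence every $r\in SEss(t,\Sigma)$ satisfies $\Sigma\models t\approx r$, so that clause (ii) of Definition \ref{d-SigmaComp} turns the conclusion of $\Sigma R_1$ into the hypothesis $u\approx w\in\Sigma$ --- is sound, and closure under $D_1$--$D_4$ is indeed automatic because $Id(L0)$ is a fully invariant congruence. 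The non-solidity step is also fine: the paper's $Hyp(\tau)$ assigns to $f$ an arbitrary binary term, so the projection $\sigma(f)=x_2$ is admissible and sends $f(x_1,x_2)\approx x_1$ to $x_2\approx x_1\notin\Sigma$ (the hypersubstitution $\sigma(f)=f(x_2,x_1)$ would work equally well). What the paper's choice of $LA$ buys is a less degenerate example: there the premise $r\in SEss(t,\Sigma)$ does not force $\Sigma\models t\approx r$, so the stability proof genuinely exercises the $\Sigma$-composition machinery, whereas in your example $\Sigma R_1$ collapses to transitivity. Both arguments establish the proposition.
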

\begin{proof}\ Let us consider the type $\tau=(2)$ and
$\Sigma=\{f(f(x_1,x_2),x_1)\approx f(x_1,x_1)\}$. We will show that
$LA=Mod(\Sigma)$ is a stable variety. So, we have to prove that
\begin{equation}\label{eq1}
\Sigma\models t^\Sigma(r\leftarrow u)\approx s^\Sigma(v\leftarrow
w),
\end{equation}
when $\Sigma\models t\approx s$, $\Sigma\models r\approx v$,
$\Sigma\models u\approx w$, $r\in SEss(t,\Sigma)$ and $v\in SEss(s,\Sigma)$.

We will proceed by induction on $Depth(t)$ - the depth of the term
$t$. The case $Depth(t)=0$ is trivial.

 Let $Depth(t)=1$.  If $t=f(x_1,x_2)$ then $\Sigma\models t\approx
 s$ implies $s=f(x_1,x_2)$ and (\ref{eq1}) is satisfied in this
 case. Let us consider the case $t=f(x_1,x_1)$. If $s=f(x_1,x_1)$
 then clearly (\ref{eq1}) holds.

  Let
 $s=f(f(x_1,s_1),x_1)$ for some $s_1\in W_\tau(X)$. Since the
 positions in $s_1$ are $\Sigma$-fictive in $s$ it follows that $v$
 can be one of the terms $x_1$ or $s$. On the other side we have
 $SEss(t,\Sigma)=\{x_1,t\}$. Hence (\ref{eq1}) is satisfied, again.

 Our inductive supposition is that if $Depth(t)<k$ then (\ref{eq1}) is
 satisfied for all $s,r,u,v,w\in W_\tau(X)$ with
  $\Sigma\models t\approx s$, $\Sigma\models r\approx v$,
$\Sigma\models u\approx w$, $r\in SEss(t,\Sigma)$ and $v\in SEss(s,\Sigma)$.

Let $Depth(t)=k>1$ and $Depth(s)\geq k$. Then we have $t=f(t_1,t_2)$
and $s=f(s_1,s_2)$, such that  $t_1$ or $t_2$ is not a variable.

If $\Sigma\models t\approx r$ or $\Sigma\models s\approx v$, then by  Definition \ref{d-SigmaComp} $(ii)$ and
the transitivity $D_3$, it follows  that  (\ref{eq1}) is $\Sigma\models u\approx w$
and we are done.

Next, we assume that $\Sigma\not\models t\approx r$ and
$\Sigma\not\models s\approx v$.

First, let  $t_1\in X$. Then  $s_1=t_1$ and $\Sigma\models
t_2\approx s_2$. Thus,   from the inductive supposition it follows
that (\ref{eq1}) is satisfied.

Second, let $t_1\notin X$. Then we have $s_1\notin X$, also. Hence
 $t=f(f(t_{11},t_{12}),t_2)$, $s=f(f(s_{11},s_{12}),s_2)$
and $\Sigma\models t_2\approx s_2$.

Let $\Sigma\models t_{11}\approx t_2$ and $\Sigma\models
s_{11}\approx s_2$. Then we have $\Sigma\models t \approx
f(t_2,t_2)$ and  $\Sigma\models f(t_2,t_2) \approx f(s_2,s_2)$. On
the other side all positions in $t_{12}$ and $s_{12}$ are
$\Sigma$-fictive in $t$ and $s$, respectively. Thus we have
$$\Sigma\models t^\Sigma(r\leftarrow u)\approx f(t_2^\Sigma(r\leftarrow u),t_2^\Sigma(r\leftarrow u))\ \mbox{
and  }$$ $$\Sigma\models s^\Sigma(v\leftarrow w)\approx
f(s_2^\Sigma(v\leftarrow w),s_2^\Sigma(v\leftarrow w)).$$ Hence
(\ref{eq1}) is satisfied, in this case, again. If $\Sigma\models
t_{11}\approx t_2$ and $\Sigma\not\models s_{11}\approx s_2$ then we
have $\Sigma\models f(t_2,t_2) \approx f(s_1,s_2)$ and
$\Sigma\models s_{1}\approx s_2\approx t_2$. This implies that
(\ref{eq1}) is satisfied, again.

Let $\Sigma\not\models t_{11}\approx t_2$ and $\Sigma\models
t_{1}\approx t_2$. Then we have $\Sigma\models t \approx
f(t_1,t_2)$. Now, we proceed similarly  as in the case
$\Sigma\models t_{11}\approx t_2$ and $\Sigma\not\models
s_{11}\approx s_2$. If $\Sigma\not\models t_{11}\approx t_2$ and
$\Sigma\not\models t_{1}\approx t_2$, then we have
$\Sigma\not\models s_{1}\approx s_2$. Hence $\Sigma\models
t_{1}\approx s_1$ and $\Sigma\models t_{2}\approx s_2$. Again, from
the inductive supposition we prove (\ref{eq1}).

To prove that $LA$ is not a solid variety, let us consider the
following terms $y=f(f(x_1,x_2),x_1)$ and $z=f(x_1,x_1)$. Let
$\sigma\in Hyp(\tau)$ be the hypersubstitution, defined as follows:
$\sigma(f(x_1,x_2)):=f(x_2,x_1)$. It is clear that $\Sigma\models
y\approx z$. On the other side we have
$\hat\sigma[y]=f(x_1,f(x_2,x_1))$ and $\hat\sigma[z]=f(x_1,x_1)$.
Thus, we obtain $\Sigma\not\models \hat\sigma[y]\approx
\hat\sigma[z]$. Hence $LA$ is not a solid variety.
\end{proof}

\begin{remark}~\label{r2}
By analogy, it follows that the variety $$RA=Mod(\{f(x_1,f(x_2,x_1))
\approx f(x_1,x_1)\})$$ is stable, but not solid, also. 

The
varieties of left-zero bands $L0=Mod(\{f(x_1,x_2)\approx x_1\})$ and
of right-zero bands $R0=Mod(\{f(x_1,x_2)\approx x_2\})$ are other
examples of stable varieties, which are not solid ones.

We do not know whether there is a non-trivial solid variety which is
not  stable?
\end{remark}

\section{$\Sigma-$balanced identities and simplification of deductions}\label{sec4}

Regular identities \cite{bur,gra} are identities in which the same
variables occur on each side of the identity. Balanced identities
are identities in which each variable occurs the same number of
times on each side of the identity.

In an analogous way we consider the concept of $\Sigma-$balanced
identities.

Let $t,r\in W_\tau(X)$ be two terms of type $\tau$ and
$\Sigma\subset Id(\tau)$ be a set of identities. $EP^t_r$ denotes
the set of all $\Sigma$-essential positions from $P^t_r$, i.e.,
$EP^t_r=PEss(t,\Sigma)\cap P^t_r$.

\begin{definition}
\label{d-sigbalanced} Let $\Sigma\subset Id(\tau)$.   We will say
that an identity $t\approx s$ of type $\tau$ is $\Sigma-$\emph{ 
balanced} if $| EP^t_q|= | EP^s_q|$ for all $q\in W_\tau(X)$.
\end{definition}
\begin{example}\label{e-balanced}\rm
Let  $\Sigma$ be the set of identities satisfied in the variety $RB$
of rectangular bands (see Example \ref{e-EssPoss1}).

 Let us consider the following three terms
$t=f(f(x_1,x_2),f(x_1,x_3))$,\ \  $s=f(x_1,f(f(x_1,x_2),x_3))$ and
$r=f(f(f(x_1,f(x_3,x_2)),x_3),f(x_1,x_3))$. Clearly, $\Sigma\models
t\approx s$,  $\Sigma\models t\approx r$,
 $SEss(t,\Sigma)=SEss(r,\Sigma)=\{x_1, f(x_1,x_2), f(x_1,x_3),x_3\}$ and $SEss(s,\Sigma)=\{x_1,
f(x_1,x_3),x_3\}$. Thus we have
\\
$EP^t_{x_1}=\{11\},$\ $EP^t_{x_3}=\{22\},$ \
$EP^t_{f(x_1,x_2)}=\{1\},$\ $EP^t_{f(x_1,x_3)}=\{\varepsilon\},$\
$EP^t_t=\{\varepsilon\},$ $EP^s_{x_1}=\{1\},$\ $EP^s_{x_3}=\{22\},$
\ $EP^s_{f(x_1,x_3)}=\{\varepsilon\},$\  $EP^s_s=\{\varepsilon\},$
 and
\ $EP^r_{x_1}=\{111\},$\ $EP^r_{x_3}=\{22\},$ \
$EP^r_{f(x_1,x_2)}=\{11\},$\ $EP^r_{f(x_1,x_3)}=\{\varepsilon\},$
$EP^r_r=\{\varepsilon\}.$ Hence the identity $ t\approx r$ is
$\Sigma-$balanced, but $ t\approx s$ is not $\Sigma-$balanced.
\end{example}
\begin{theorem}
\label{t-sigbalanced1} Let $\Sigma\subset Id(\tau)$ be a set of
 $\Sigma-$balanced identities. If there is a $\Sigma R-$deduction
of $ t\approx s$  with $\Sigma-$balanced  identities,  then
$t\approx s$ is a $\Sigma-$balanced identity of type $\tau.$
\end{theorem}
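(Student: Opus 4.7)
I would proceed by induction on the length $n$ of a shortest $\Sigma R$-deduction $t_1\approx s_1,\ldots,t_n\approx s_n$ of $t\approx s$. If $n=1$, then either $t\approx s\in\Sigma$ and is $\Sigma$-balanced by hypothesis, or $t\approx s$ is the reflexivity instance $t\approx t$, which is trivially $\Sigma$-balanced. For the inductive step I case on the last derivation rule producing $t_n\approx s_n$; rules $D_1$, $D_2$, $D_3$ preserve $\Sigma$-balancedness directly from Definition~\ref{d-sigbalanced} -- symmetry swaps the two sides of $|EP^t_q|=|EP^s_q|$, and transitivity chains two such equalities.

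For rule $D_4$, suppose $t'\approx s'$ is $\Sigma$-balanced and we derive $t'(x\leftarrow r)\approx s'(x\leftarrow r)$. The positions of $t'(x\leftarrow r)$ split into (a) positions $p\in Pos(t')$ not lying strictly below any $x$-leaf, and (b) positions $p\cdot p'$ with $p\in P^{t'}_x$ and $p'\in Pos(r)$, lying inside an inserted copy of $r$. I would prove the counting identity
\[
|EP^{t'(x\leftarrow r)}_q| \;=\; A_q(t',r) \;+\; |EP^{t'}_x|\cdot|EP^r_q|,
\]
where $A_q(t',r)$ is the skeleton contribution from class~(a). Since $|EP^{t'}_{q'}|=|EP^{s'}_{q'}|$ for every $q'$ -- in particular $q'=x$ and every subterm relevant to $A_q$ -- the analogous decomposition for $s'(x\leftarrow r)$ yields the required equality term by term.

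The main obstacle is rule $\Sigma R_1$. Assume $t'\approx s'$, $r\approx v$, $u\approx w$ are all $\Sigma$-balanced with $r\in SEss(t',\Sigma)$ and $v\in SEss(s',\Sigma)$, and we must show $(t')^\Sigma(r\leftarrow u)\approx(s')^\Sigma(v\leftarrow w)$ is $\Sigma$-balanced. By Proposition~\ref{p-Sigma}(ii), $P^{t'}_r=P^{t'}_v$ and $P^{s'}_r=P^{s'}_v$, hence $EP^{t'}_r=EP^{t'}_v$ and $EP^{s'}_r=EP^{s'}_v$. The $\Sigma$-essential positions of $(t')^\Sigma(r\leftarrow u)$ with subterm $\Sigma$-equal to $q$ decompose into a skeleton part -- positions of $t'$ strictly above every element of $P^{t'}_r$ -- plus $|EP^{t'}_r|$ disjoint copies of $EP^u_q$ inside the inserted $u$'s. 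The hard part will be proving a compositionality lemma
\[
|EP^{t^\Sigma(r\leftarrow u)}_q| \;=\; B_q(t,r) \;+\; |EP^t_r|\cdot|EP^u_q|,
\]
where $B_q(t,r)$ is the above skeleton count; this requires verifying that $\Sigma$-essentiality of a skeleton position in $t$ is preserved in $t^\Sigma(r\leftarrow u)$, and that a position inside an inserted $u$ is $\Sigma$-essential in the composite exactly when it is $\Sigma$-essential in $u$ and lies under a $\Sigma$-essential replacement position. Once this lemma is established, the three balancedness hypotheses $|EP^{t'}_{q'}|=|EP^{s'}_{q'}|$ (which both pairs the skeleton contributions and gives $|EP^{t'}_r|=|EP^{s'}_r|=|EP^{s'}_v|$) together with $|EP^u_q|=|EP^w_q|$ combine to yield $|EP^{(t')^\Sigma(r\leftarrow u)}_q|=|EP^{(s')^\Sigma(v\leftarrow w)}_q|$ for every $q$, completing the induction.
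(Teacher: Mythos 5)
Your overall frame (induction on the length of the $\Sigma R$-deduction, with a case split on the last rule applied) is sensible, and you are right that $D_1$--$D_3$ are immediate and that $D_4$ and $\Sigma R_1$ carry the weight; you even treat $D_4$ more explicitly than the paper does. But the $\Sigma R_1$ case, which you correctly identify as the main obstacle, rests on two unproven claims, and the second is where the entire difficulty of the theorem lives. First, the compositionality lemma $|EP^{t^\Sigma(r\leftarrow u)}_q| = B_q(t,r)+|EP^t_r|\cdot|EP^u_q|$ presupposes that (a) a skeleton position of $t$ is $\Sigma$-essential in $t^\Sigma(r\leftarrow u)$ exactly when it is $\Sigma$-essential in $t$, and (b) a position $pq$ with $p\in P^t_r$, $q\in Pos(u)$ is $\Sigma$-essential in the composite exactly when $p\in EP^t_r$ and $q\in PEss(u,\Sigma)$. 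Neither is obvious: $\Sigma$-essentiality of a position is a property of the whole term, and the composite differs from $t$ below every element of $P^t_r$, so the context of every skeleton position has changed. You flag this but give no argument.

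Second, and more seriously: even granting that lemma, you need $B_q(t',r)=B_q(s',v)$ for every $q$, and you assert that balancedness of $t'\approx s'$ ``pairs the skeleton contributions.'' It does not, at least not directly. $B_q(t',r)$ counts essential skeleton positions $p$ whose subterm in the \emph{composite}, namely $(sub_{t'}(p))^\Sigma(r\leftarrow u)$, is $\Sigma$-equal to $q$, whereas the hypothesis $|EP^{t'}_{q'}|=|EP^{s'}_{q'}|$ only controls the $\Sigma$-classes of subterms of $t'$ and $s'$ \emph{before} replacement. To transfer the count you would need $\Sigma$-equal subterms to remain $\Sigma$-equal (or at least to land in matching classes) after $\Sigma$-composition, and the paper's Theorem \ref{t-TCclosure} exhibits exactly the failure of this: $\Sigma\models a\approx b$ does not imply $\Sigma\models a^\Sigma(r\leftarrow u)\approx b^\Sigma(r\leftarrow u)$. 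The paper avoids any such global counting formula: for $\Sigma R_1$ it runs an inner induction on $Depth(t)$, writing $t=f(t_1,\ldots,t_n)$, disposing of the cases $\Sigma\models r\approx t$ and $\Sigma\models r\approx t_i$ separately, and otherwise using $\Sigma$-balancedness of $t\approx s$ to pair each $t_j$ containing positions of $P^t_r$ with a $\Sigma$-equal subterm $s_j$ of $s$, applying the induction hypothesis to the $\Sigma$-balanced identities $t_j\approx s_j$, and assembling $EP^{t}_r=\cup_{j} EP^{t_j}_r$ and $EP^{s}_v=\cup_{j} EP^{s_j}_v$. You would need either to actually prove your two claims (the second of which I do not believe follows from the stated hypotheses) or to switch to a structural induction of this kind.
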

\begin{proof}\ Let $t,s,r\in W_\tau(X)$ and let $t\approx s$ and
$s\approx r$ be two $\Sigma-$balanced identities of type $\tau$.
Then for each term $q\in W_\tau(X)$ we have  $$|EP^t_q|=|EP^s_q|\quad\mbox{and}\quad  |EP^s_q|=|EP^r_q|.$$ Hence $ |EP^t_q|=|EP^r_q|$
which shows that the identity 
$t\approx r$ is $\Sigma-$balanced, too.

Let $t\approx s$ is a $\Sigma-$balanced identity in $\Sigma
R(\Sigma)$ and let $r\in W_\tau(X)$ be a term with $t\in
SEss(r,\Sigma)$ and $sub_r(p)=t$. We have $\Sigma\models_{\Sigma R}
r(p;s)\approx r$. From Proposition \ref{p-Sigma}, we obtain
$$(EP^t_q=EP^s_q\ \&\ t\in SEss(r,\Sigma))\ \Rightarrow EP^r_q=EP^{r(p;s)}_q$$
for all $q\in W_\tau(X)$. Consequently the identity $r(p;s)\approx
r$ is $\Sigma-$balanced, too.

Let $t\approx s$, $r\approx v$ and $u\approx w$  be
$\Sigma-$balanced identities from $\Sigma R(\Sigma)$. We have to
prove that  the resulting identity $t^\Sigma(r\leftarrow u)\approx
s^\Sigma(v\leftarrow w)$ is $\Sigma-$balanced.

This  will be  done by induction on the depth (also called ``height"
by some authors).

(i) The  basis of induction is  $Depth(t)=1$ (the case $Depth(t)=0$ is
trivial). Let $t=f(x_1,\ldots,x_n)\in W_\tau(X_n)$ and let
$s=g(s_1,\ldots,s_m).$ Hence, if $r=x_i$ for some
$i\in\{1,\ldots,n\}$, then $EP^t_r=\{i\}$  and
$|EP^t_r|=|EP^s_v|=1$.

(ia)  If $\Sigma\models r\approx t$, then $\Sigma\models v\approx s$ and
we have $t^\Sigma(r\leftarrow u)=u$ and $s^\Sigma(v\leftarrow w)=w$.
Hence $t^\Sigma(r\leftarrow u)\approx s^\Sigma(v\leftarrow w)$ is
$\Sigma-$balanced in this case.

(ib) Let $\Sigma\not\models r\approx x_i$ for each $x_i\in X_n$, i.e.,
$r\not\in X_n$ and $\Sigma\not\models r\approx t$. If $r\notin
X_n\cup\{t\}$, then $EP^t_r=EP^s_v=\emptyset$. Thus we have
$$t^\Sigma(r\leftarrow u)=t\ \mbox{ and}\ s^\Sigma(v\leftarrow w)=s$$ and
 the resulting identity $t\approx s$ is $\Sigma-$balanced.

(ic) Let $\Sigma\models r\approx x_i$ for some $x_i\in X_n$ and
$\Sigma\not\models r\approx t$. We have $$t^\Sigma(r\leftarrow
u)=t^\Sigma(x_i\leftarrow u)\ \mbox{and}\ s^\Sigma(v\leftarrow
w)=s^\Sigma(x_i\leftarrow w).$$ Then $EP^{t^\Sigma(x_i\leftarrow
u)}_q=EP^u_q$ and $EP^{s^\Sigma(x_i\leftarrow w)}_q=EP^w_q$ for each
$q\in W_\tau(X)$, i.e., the resulting identity $t^\Sigma(r\leftarrow
u)\approx s^\Sigma(v\leftarrow w)$ is $\Sigma-$balanced, again.

(ii) Let $Depth(t)>1$, $t=f(t_1,\ldots,t_n)$ and $s\in W_\tau(X)$, such
that the identity
 $\Sigma\models t\approx s$ is $\Sigma$-balanced. Suppose that for each $t'\in
SEss(t,\Sigma)$ with $t'\neq t$ the following is true: if $\Sigma\models t'\approx s'$ is
$\Sigma$-balanced identity for some $s'\in W_\tau(X)$, then $
t'^\Sigma(r\leftarrow u)\approx s'^\Sigma(v\leftarrow w)$ is
$\Sigma$-balanced, also.

(iia) Let $\Sigma\models r\approx t$. Then we have
$$EP^{t^\Sigma(x_i\leftarrow u)}_q=EP^u_q\ \mbox{and}\ EP^{s^\Sigma(x_i\leftarrow
w)}_q=EP^w_q$$ for each $q\in W_\tau(X)$ and the resulting identity
is $\Sigma-$balanced in that case, again.

(iib)  Let $\Sigma\not\models r\approx t$ and  $\Sigma\models
r\approx t_i$ for some $i=1,\ldots,n.$ As in the case $(ic)$ it can
be proved that the identity $t^\Sigma(r\leftarrow u)\approx
s^\Sigma(v\leftarrow w)$ is $\Sigma-$balanced.

(iic) Let $\Sigma\not\models r\approx t$, $\Sigma\not\models
r\approx t_i$ for each $i=1,\ldots,n$ and there is $j\in
\{1,\ldots,n\}$ with
$EP^{t_j}_r=\{r_{j1},\ldots,r_{jk_j}\}\neq\emptyset.$ Without loss
of generality assume that all such  $j$ are the natural numbers from
the set  $L=\{1,\ldots,l\}$ with $l\leq n.$

Let $j\in L$. If $\Sigma\models t\approx t_j$ is a $\Sigma-$balanced
identity, then $\Sigma\models s\approx t_j$ is  $\Sigma-$balanced,
also and by our assumption, we have that $t_j^\Sigma(r\leftarrow
u)\approx s^\Sigma(v\leftarrow w)$ is $\Sigma-$balanced identity.
Hence we have $|EP^t_q|=|EP^{t_j}_q|$ for all $q\in W_\tau(X).$ This
implies that the resulting identity is $\Sigma-$ balanced in this
case, also.

If $\Sigma\not\models t\approx t_j$ for all $j\in L$ then since
$t\approx s$ is a $\Sigma-$balanced identity, there are subterms
$s_1,\ldots, s_l$ of $s$ such that  $\Sigma\models t_j\approx s_j$.
 According to our inductive supposition the last
identities are $\Sigma-$balanced. Consequently,
$$|EP^{t_j}_r|=|EP^{s_j}_r|,\ EP^{t}_r=\cup_{j=1}^l EP^{t_j}_r\
\mbox{ and}\ EP^{s}_v=\cup_{j=1}^l EP^{s_j}_v.$$ Hence
$t^\Sigma(r\leftarrow u)\approx s^\Sigma(v\leftarrow w)$ is a
$\Sigma-$balanced identity. \end{proof}

The complexity of the problem of deduction depends on the complexity
of the algorithm for checking when a position of a term is essential
or not with respect to  a set of identities. The complexity of that
algorithm for finite algebras is discussed in \cite{sht}, but it is
based on the full exhaustion of all possible cases.

There should be  a case or cases,  when the process of deduction can
be effectively simplified. This is,  for instance, when a variable
$x$ does not belong to $var(t)$ and $\Sigma\models t\approx s.$
Therefore  we obtain $x\notin Ess(t,\Sigma)\cup Ess(s,\Sigma)$(see
Theorem \ref{t-EssSig}). Then we can skip the rules $D_4''$ and
$D_5''$, according to Proposition \ref{p-EssDeriv}.
 Obviously, it is very easy to
check if $x\in var(t)$ or not.

\end{document}